\def\N{\mathop{\mathbb N\kern 0pt}\nolimits}
\def\Q{\mathop{\mathbb Q\kern 0pt}\nolimits}
\def\R{\mathop{\mathbb R\kern 0pt}\nolimits}
\def\SS{\mathop{\mathbb S\kern 0pt}\nolimits}
\theoremstyle{plain}
\newtheorem{theorem}{Theorem}[section]
\newtheorem{lemma}[theorem]{Lemma}
\theoremstyle{definition}
\newtheorem{remark}{Remark}[section]
\numberwithin{equation}{section}
\title{The factorizations of $H^\rho(\mathbb{R}^n)$ via multilinear Calder\'{o}n-Zygmund operators on weighted Lebesgue spaces
\footnote{This project is partially supported by the Natural Science Foundation of Anhui Province (No.2108085QA19)
and the National Natural Science Foundation of China(Nos.12101010,12071223,11971237,11771223)}\\}
\author{Dinghuai Wang, Rongxiang Zhu and Lisheng Shu\\
[12pt] {\small School of Mathematics and Statistics, Anhui Normal University, Wuhu, 241002, China}\\
    [12pt] {\small  Email: Wangdh1990@126.com; ZRx1268@163.com; shulsh@mail.ahnu.edu.cn.}\\}
\begin{document}

\date{}
\maketitle
\thispagestyle{empty}

\begin{abstract}
We extend the recently much-studied Hardy factorization theorems to the weight case. The key point of this paper is to establish the factorization theorems without individual condition on the weight functions. As a direct application, we obtain the characterizations of $BMO(\mathbb{R}^n)$ space and Lipschitz spaces via the weighted boundedness of commutators of multilinear Calder\'{o}n-Zygmund operators with the genuinely multilinear weights.

\vskip 0.2 true cm

\noindent
\textbf{Keywords:}
Hardy space; $BMO$ space; Multilinear Calder\'{o}n-Zygmund operators; Factorization theorem; Weighted Lebesgue spaces.

\vskip 0.2 true cm
\noindent
\textbf{2020 Mathematical Subject Classification.} 42B35; 42B20.
\noindent
\end{abstract}

\vskip 0.6 true cm

\section{Introduction}

\qquad  We briefly summarize some classical and recent works in the literature, which lead to the results presented here. The first result on the weak factorization theorem of Hardy space $H^{1}(\mathbb{R}^n)$ in terms of Caldr\'{o}n-Zygmund operators is due to the work of Cofiman, Rochberg and Wiess \cite{CRW1976}. The result depends upon the duality between $H^{1}(\mathbb{R}^n)$ and $BMO(\mathbb{R}^n)$ and upon a new result linking $BMO(\mathbb{R}^n)$ and the $L^{p}(\mathbb{R}^n)$ boundedness of certain commutator operators. For the multilinear case, the weak factorization of the classical Hardy space $H^{1}(\mathbb{R}^n)$ was established by Li and Wick \cite{LW2018}. On the other hand, Uchiyama in \cite{U1981} gave a proof of the weak factorization theorem of Hardy space $H^\rho(\mathbb{R}^n)$ with $\frac{n}{n+1}<\rho<1$, while the one for multilinear case is credited in \cite{K2018}. Recently, Dao and Wick \cite{DW2021} obtained the $H^{1}(\mathbb{R}^n)$ in terms of multilinear Caldr\'{o}n-Zygmund operators using Morrey spaces. In \cite{WZ2021}, we considered the results related to fractional integral operator.

Inspired by the above works, we want to provide the constructive proof of the weak factorization $H^\rho(\mathbb{R}^n)$ in terms of multilinear operators of Calder\'{o}n-Zygmund type on weighted Lebesgue spaces with the general multiple weights. Our strategy and approach will be to modify the direct constructive proof of Li and Wick \cite{LW2018} and Uchiyama in \cite{U1981}. However, in the multilinear setting the most interesting phenomena occurs when the genuinely multilinear weights are used.
Many weighted works in the multilinear setting treat each variable separately with its own Muckenhoupt class of weights, since the multi-weight condition cannot deduce that the weight function is locally integrable. For example, the initial work of multivariable Rubio de Francia extrapolation theorem was obtained by \cite{CUM} (or \cite{D2011}) for $\vec{\omega}\in A_{\vec{P}}$ with $\omega_{i}\in A_{p_{i}}$. These works treat each variable separately with its own Muckenhoupt class of weights. Recently, the long standing problem of multivariable Rubio de Francia extrapolation theorem for the multilinear Muckenhoupt classes $A_{\vec{P}}$ was showed by Li, Martell and Ombrosi in \cite{LMOarXiv}. Thus, it is very interesting to obtain the factorization theorems on weighted Lebesgue spaces using the multivariable nature of the problem.

The purpose of this article is first: to establish the factorization result for $H^{1}(\mathbb{R}^{n})$ in terms of multilinear Calder\'{o}n-Zygmund on weighted Lebesgue spaces with the general $A_{\vec{P}}$ weights; and second: to obtain the similar result for $H^{\rho}(\mathbb{R}^{n})(\frac{n}{n+\gamma}<\rho<1)$ related to $A_{\vec{P},q}$ weights. Specially, the results are new even in the linear case.

Dual to the multilinear commutator, in both language and via a formal computation, we define the multilinear "multiplication"operators $ {\textstyle \prod _{l}}$ as follows.
	\begin{align}\label{commutator}
	 {\textstyle\prod_{l}}(g,h_1,\cdots,h_m)(x):=h_lT_l^*(h_1,\cdots,h_{l-1,}g,h_{l+1},\cdots,h_m)(x)-gT(h_1,\cdots,h_m)(x),
	\end{align}
where $T_l^*$ is the $l$-th partial adjoint of $T$.

Now let us state the main results of this article. The precise definitions are given in the next section.

\begin{theorem}\label{main1}	
Let $1\le l\le m$, $1<p, p_{1},\dots,p_{m}<\infty$, $\frac{1}{p_1}+\cdots+\frac{1}{p_m}=\frac{1}{p}$ and $\vec{\omega}\in A_{\vec{P}}$. Then for any $f\in H^{1}(\mathbb{R}^{n})$, there exists sequences $  \{ \lambda_{s}^{k}    \}\in \ell_{1}$ and functions $g_{s}^{k}\in L^{p^{\prime}}(\nu^{1-p^{\prime}}_{\vec{\omega}}),h_{s,1}^{k}\in L^{p_{1}}(\omega_{1}),\dots ,h_{s,m}^{k}\in L^{p_{m}}(\omega_m)$ such that
    \begin{align}\label{f=1}
     f=\sum_{k=1}^{\infty}\sum_{s=1}^{\infty}\lambda_{s}^{k}
     {\textstyle \prod_{l}}(g_{s}^{k},h_{s,1}^{k},\dots ,h_{s,m}^{k})
     \end{align}
     in the sense of $H^{1}(\mathbb{R}^{n})$. Moreover,
      \begin{align*}
      \|f\|_{H^{1} (\mathbb{R}^{n} )}\approx
      \inf \bigg\{\sum_{k=1}^{\infty}\sum_{s=1}^{\infty} |\lambda_{s}^{k}|
      \|g_{s}^{k} \|_{L^{p^{\prime}}(\nu^{1-p^{\prime}}_{\vec{\omega}})}
      \|h_{s,1}^{k} \|_{L^{p_{1}}(\omega_1)}\cdots \|h_{s,m}^{k}\|_{L^{p_{m}}(\omega_m)}\bigg\},
      \end{align*}
where the infimum above is taken over all possible representations of $f$ that satisfy \eqref{f=1}.
\end{theorem}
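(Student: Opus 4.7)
The plan is to prove the norm equivalence and the existence of the factorization \eqref{f=1} in two complementary stages: an upper bound showing that every product ${\textstyle\prod_l}(g, h_1, \ldots, h_m)$ lies in $H^1(\mathbb{R}^n)$ with the stated weighted control, and a lower bound constructing such a decomposition for arbitrary $f \in H^1$. The upper bound is handled by $H^1$-$BMO$ duality, and the lower bound by an iterated atomic approximation in the spirit of Coifman-Rochberg-Weiss \cite{CRW1976}, Uchiyama \cite{U1981}, and Li-Wick \cite{LW2018}, adapted to the weighted multilinear setting.

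For the upper bound, I pair an arbitrary $b \in BMO(\mathbb{R}^n)$ against ${\textstyle\prod_l}(g, h_1, \ldots, h_m)$ and move $b$ inside the operator through the defining relation of the $l$-th partial adjoint $T_l^*$ to obtain
\begin{align*}
\int_{\mathbb{R}^n} b(x)\,{\textstyle\prod_l}(g, h_1, \ldots, h_m)(x)\, dx
= \int_{\mathbb{R}^n} g(x)\, [b, T]_l(h_1, \ldots, h_m)(x)\, dx,
\end{align*}
where $[b, T]_l(h_1, \ldots, h_m) := T(h_1, \ldots, bh_l, \ldots, h_m) - b\, T(h_1, \ldots, h_m)$. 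Applying H\"older with the weight pair $(\nu_{\vec{\omega}}, \nu_{\vec{\omega}}^{1-p'})$ and invoking the known weighted boundedness $[b, T]_l \colon L^{p_1}(\omega_1) \times \cdots \times L^{p_m}(\omega_m) \to L^p(\nu_{\vec{\omega}})$ for $\vec{\omega} \in A_{\vec{P}}$ and $b \in BMO$ yields
\begin{align*}
\left| \int_{\mathbb{R}^n} b \cdot {\textstyle\prod_l}(g, h_1, \ldots, h_m)\, dx \right|
\lesssim \|b\|_{BMO}\, \|g\|_{L^{p'}(\nu_{\vec{\omega}}^{1-p'})} \prod_{i=1}^{m} \|h_i\|_{L^{p_i}(\omega_i)},
\end{align*}
and $H^1$-$BMO$ duality converts this into the target $H^1$ estimate.

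For the lower bound I would start from an atomic decomposition $f = \sum_j \lambda_j a_j$ and reduce to approximating a single $(1,\infty)$-atom $a$ supported on $Q = Q(x_0, r)$. Fix an auxiliary cube $\tilde{Q} = Q(y_0, r)$ with $|x_0 - y_0| \sim Nr$ for a large constant $N$, placed in a direction where the multilinear kernel of $T$ satisfies a quantitative non-degeneracy lower bound. The ansatz is to take each $h_i$ as a normalized bump of $\tilde{Q}$ in $L^{p_i}(\omega_i)$ and $g$ as a suitable multiple of $a$, with the normalizing constants chosen so that $g(x)\, T(h_1, \ldots, h_m)(x) \approx a(x)$ for $x \in Q$. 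The residual
\begin{align*}
a - {\textstyle\prod_l}(g, h_1, \ldots, h_m)
= \bigl(a + g\, T(h_1, \ldots, h_m)\bigr) - h_l\, T_l^*(h_1, \ldots, h_{l-1}, g, h_{l+1}, \ldots, h_m)
\end{align*}
can then be made to have $H^1$-norm at most $\tfrac{1}{2}$ by taking $N$ sufficiently large: the first bracket is controlled by the H\"older regularity of the kernel on $Q$, while the second term is essentially a small-norm atom localized near $\tilde{Q}$. Iterating this construction on the residual produces a geometrically convergent series of factorizations, and summing over $j$ yields \eqref{f=1} with the matching lower norm bound.

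The principal technical obstacle is ensuring that the weighted product $\|g\|_{L^{p'}(\nu_{\vec{\omega}}^{1-p'})} \prod_i \|h_i\|_{L^{p_i}(\omega_i)}$ remains uniformly bounded in $Q$ and $\tilde{Q}$ using only the joint $A_{\vec{P}}$ condition, since individual $\omega_i$ need not be locally integrable, let alone belong to any $A_{p_i}$. The natural normalizations introduce factors of $\omega_i(\tilde{Q})^{-1/p_i}$ and $\nu_{\vec{\omega}}^{1-p'}(\tilde{Q})^{-1/p'}$ whose product must be balanced against the kernel amplitude at scale $|\tilde{Q}|$, and this balance is exactly the content of the $A_{\vec{P}}$ inequality together with the doubling consequences $\nu_{\vec{\omega}} \in A_{mp}$ and $\omega_i^{1-p_i'} \in A_{mp_i'}$ derived from it. Tracking this balance in full generality, as opposed to appealing to individual $A_{p_i}$ assumptions, is what distinguishes the present statement from earlier product-weight factorizations and constitutes the key new input, building on the extrapolation framework of Li-Martell-Ombrosi \cite{LMOarXiv}.
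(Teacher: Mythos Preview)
Your overall architecture matches the paper's: the upper bound via $H^1$--$BMO$ duality and the weighted commutator estimate is exactly what the paper does (Lemma~\ref{lem-main2}), and the iterative atomic scheme for the lower bound is the same Coifman--Rochberg--Weiss/Uchiyama/Li--Wick mechanism used in Section~\ref{s4}. There is an inessential role swap---you make $g$ the multiple of the atom $a$ and place the $h_i$ on the far cube, whereas the paper makes $h_l$ the multiple of $a$ and places $g$ on the far ball---but the two arrangements are symmetric and either can be made to work.

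The genuine gap is in your construction of the test functions at the atomic step. You propose taking each $h_i$ as a ``normalized bump of $\tilde Q$ in $L^{p_i}(\omega_i)$'' and then write that ``the natural normalizations introduce factors of $\omega_i(\tilde Q)^{-1/p_i}$.'' But this is precisely where the genuinely multilinear weights bite: under $A_{\vec P}$ alone, $\omega_i$ need not be locally integrable, so $\omega_i(\tilde Q)$ may be infinite and a characteristic-function bump has no $L^{p_i}(\omega_i)$ normalization at all. Recognizing the obstacle is not the same as resolving it, and the $A_{\vec P}$ inequality cannot ``balance'' a quantity that is not defined.

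The paper's fix (Lemma~\ref{lem-main3}) is to build the test functions out of the \emph{dual} weights, which \emph{are} locally integrable by the structure theorem $\omega_j^{1-p_j'}\in A_{mp_j'}$, $\nu_{\vec\omega}\in A_{mp}$. Concretely, for $j\neq l$ one takes $h_j=\omega_j^{\,1-p_j'}\chi_{B_j}$, so that $\|h_j\|_{L^{p_j}(\omega_j)}^{p_j}=\int_{B_j}\omega_j^{1-p_j'}<\infty$; the far-ball function is taken as $(\nu_{\vec\omega}/\nu_{\vec\omega}(B_l))^{1/p}|B_l|^{1/p}\chi_{B_l}$; and the function carrying the atom is $a(x)\,\omega_l^{1-p_l'}(x)$ up to a scalar. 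All the norms then involve only $\nu_j(B_j)=\omega_j^{1-p_j'}(B_j)$ and $\nu_{\vec\omega}(B_l)$, and the product is controlled by combining the pointwise lower bound $|B|^m\le \nu_{\vec\omega}(B)^{1/p}\prod_j \nu_j(B)^{1/p_j'}$ (which \emph{is} the $A_{\vec P}$ condition read from below) with the $A_\infty$ doubling $\nu_j(B_l)\lesssim N^{nL}\nu_j(B_j)$ to move all the balls to a common one. This is the missing idea in your sketch; once it is in place, the residual estimate and the iteration go through exactly as you outlined.
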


\begin{theorem}\label{main2}	
Let $1\le l\le m$, $\frac{n}{n+\gamma}<\rho<1$, $1< p_{1},\dots,p_{m},q<\infty$, $\frac{1}{p_1}+\cdots+\frac{1}{p_m}+1=\frac{1}{\rho}+\frac{1}{q}$ and $\omega\in A_{\vec{P},q}$.
	Then for any $f\in H^{\rho}(\mathbb{R}^n)$, there exists sequences $  \{ \lambda_{s}^{k}  \}\in \ell_{\rho}$ and functions $ g_{s}^{k}\in L^{q'}(\mu_{\vec{\omega}}^{1-q'}),h_{s,1}^{k}\in L^{p_{1}}(\omega_1^{p_1}),\cdots ,h_{s,m}^{k}\in L^{p_{m}}(\omega_{m}^{p_m})$ such that
	\begin{align}\label{f=2}
		f=\sum_{k=1}^{\infty}\sum_{s=1}^{\infty}\lambda_{s}^{k}
		{\textstyle \prod_{l}}(g_{s}^{k},h_{s,1}^{k},\dots ,h_{s,m}^{k})
	\end{align}
	in the sense of $H^{\rho}(\mathbb{R}^n)$. Moreover,
	\begin{align*}
		\|f\|_{H^{\rho} (\mathbb{R}^n )}\approx
		\inf \bigg\{\Big(\sum_{k=1}^{\infty}\sum_{s=1}^{\infty} \big(|\lambda_{s}^{k} |
		 \|g_{s}^{k} \|_{L^{q'}(\mu_{\omega}^{1-q'})}
		 \|h_{s,1}^{k} \|_{L^{p_{1}}(\omega_1^{p_1})}\cdots \|h_{s,m}^{k} \|_{L^{p_{m}}(\omega_m^{p_m})}\big)^{\rho}\Big)^{1/\rho} \bigg\},
	\end{align*}
	where the infimum above is taken over all possible representations of $f$ that satisfy \eqref{f=2}.
\end{theorem}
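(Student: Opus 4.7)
The structure of the proof mirrors Theorem~\ref{main1}, with duality against $BMO$ replaced by duality against a Lipschitz space: for $\rho\in(n/(n+\gamma),1)$ the dual of $H^\rho(\mathbb{R}^n)$ is $\mathrm{Lip}_\alpha$ with $\alpha=n(1/\rho-1)<\gamma$, and this will play the role that $BMO$ played in the case $\rho=1$. The argument breaks into an upper-bound estimate, showing that each product lies in $H^\rho$ with the right weighted norm, and a lower-bound construction, realising an arbitrary $f\in H^\rho$ as an $\ell_\rho$-convergent sum of such products.

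For the $\lesssim$ direction, the essential step is
\begin{align*}
\bigl\|{\textstyle\prod_l}(g,h_1,\ldots,h_m)\bigr\|_{H^\rho}\lesssim\|g\|_{L^{q'}(\mu_{\vec\omega}^{1-q'})}\prod_{i=1}^m\|h_i\|_{L^{p_i}(\omega_i^{p_i})},
\end{align*}
after which $\rho$-subadditivity of $\|\cdot\|_{H^\rho}^\rho$ controls the full series. I would test against $b\in\mathrm{Lip}_\alpha$ and use the definition of the partial adjoint $T_l^*$ to rewrite $\int b\cdot{\textstyle\prod_l}(g,h_1,\ldots,h_m)\,dx$ as $\int g\cdot[b,T]_l(h_1,\ldots,h_m)\,dx$ (up to sign), reducing the bound to the weighted commutator estimate $[b,T]_l:L^{p_1}(\omega_1^{p_1})\times\cdots\times L^{p_m}(\omega_m^{p_m})\to L^q(\mu_{\vec\omega}^q)$ for $b\in\mathrm{Lip}_\alpha$ and $\vec\omega\in A_{\vec P,q}$.

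For the $\gtrsim$ direction I would start from the atomic decomposition $f=\sum_k\nu_k a_k$ with $\sum|\nu_k|^\rho\lesssim\|f\|_{H^\rho}^\rho$, each $a_k$ being a $(\rho,\infty)$-atom on a cube $Q_k$ with vanishing moments up to order $\lfloor\alpha\rfloor$. The pivotal step is to produce, for each atom $a$, a triple $(g,h_1,\ldots,h_m)$ with $\|g\|_{L^{q'}(\mu_{\vec\omega}^{1-q'})}\prod_i\|h_i\|_{L^{p_i}(\omega_i^{p_i})}\lesssim 1$ and $\|a-{\textstyle\prod_l}(g,h_1,\ldots,h_m)\|_{H^\rho}\le 1/2$; iterating on the renormalised remainder then yields a geometric series summing to $a$ in $H^\rho$. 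Following Uchiyama and Li--Wick, I would take $g$ proportional to $a$, select the auxiliary $h_i$ ($i\ne l$) supported on a far translate $\widetilde Q$ of $Q$ so that $T_l^*(h_1,\ldots,h_{l-1},g,h_{l+1},\ldots,h_m)$ is essentially constant and bounded away from zero on $\widetilde Q$ (using the non-degeneracy and H\"older-$\gamma$ smoothness of the kernel), and then choose $h_l$ on $\widetilde Q$ so that $h_l T_l^*(\cdots)$ cancels $a$ up to a small error; the second summand $gT(h_1,\ldots,h_m)$ contributes negligibly because $\mathrm{supp}\,g\subset Q$ lies far from the bulk of $T(h_1,\ldots,h_m)$.

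The principal obstacle is carrying out this construction using only the joint condition $\vec\omega\in A_{\vec P,q}$, with no individual Muckenhoupt control on the $\omega_i$ or on $\mu_{\vec\omega}^{1-q'}$; the separate weighted norms of $g$ and the $h_i$ cannot be bounded individually. The remedy, and the crux of the paper, is to design the pointwise magnitudes on $Q$ and $\widetilde Q$ so that only the \emph{product} $\|g\|_{L^{q'}(\mu_{\vec\omega}^{1-q'})}\prod_i\|h_i\|_{L^{p_i}(\omega_i^{p_i})}$ ever needs to be estimated, at which point a single application of the $A_{\vec P,q}$ condition to the cubes $Q,\widetilde Q$ supplies the averaged bound. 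A subsidiary difficulty for $\rho<1$ is that the remainder must still carry the vanishing moments required of a normalised $H^\rho$-atom at comparable scale; this is where the assumption $\rho>n/(n+\gamma)$ is used decisively, as the H\"older-$\gamma$ smoothness of the kernel propagates the cancellation of $a$ through ${\textstyle\prod_l}$ up to order $\lfloor\alpha\rfloor<\gamma$. Summing the per-atom factorisations and relabelling the double series then produces the stated $\ell_\rho$-convergent decomposition of $f$.
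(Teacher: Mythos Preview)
Your overall architecture matches the paper: the upper bound via $\mathrm{Lip}_\alpha$-duality and the weighted commutator is Lemma~\ref{lem-main22}, and the lower bound via atomic decomposition, per-atom approximation (Lemma~\ref{lem-main4}), and geometric iteration is exactly how Section~\ref{s4} proceeds. But your description of the approximation step is internally inconsistent. You take $g\propto a$ on $Q$, place all $h_i$ on a far translate $\widetilde Q$, and then assert that $T_l^*(h_1,\ldots,g,\ldots,h_m)$ is ``bounded away from zero on $\widetilde Q$'' so that ``$h_l T_l^*(\cdots)$ cancels $a$.'' Both claims fail: since $g\propto a$ has mean zero, $T_l^*(\ldots,g,\ldots)(x)$ is $O(N^{-\gamma})$ for $x$ far from $Q$ by kernel regularity, not bounded below; and $h_l T_l^*(\cdots)$ is supported on $\widetilde Q$, disjoint from $\mathrm{supp}\,a$, so it cannot cancel $a$ pointwise. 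You have the roles of the two summands in $\Pi_l$ reversed. The paper puts the atom into $h_l$, not $g$: it places $g$ and the $h_j$ ($j\ne l$) on far balls $B_j$, uses the $mn$-homogeneity hypothesis to bound $T_l^*(\ldots,g,\ldots)(x_0)$ from below, sets $h_l\approx a/T_l^*(\cdots)(x_0)$ so that $h_l T_l^*(\cdots)$ is the main term approximating $a$, and then $gT(h_1,\ldots,h_m)$ is the small remainder (small because now $h_l\propto a$ carries the cancellation).

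Your diagnosis of the ``principal obstacle'' is also slightly off. The condition $\vec\omega\in A_{\vec P,q}$ \emph{does} yield individual control, namely $\omega_i^{-p_i'}\in A_{mp_i'}\subset A_\infty$ and $\mu_{\vec\omega}\in A_{mq}$, and the paper exploits this: the auxiliary functions are not bare indicators but carry these weights as densities, $h_j=\omega_j^{-p_j'}\chi_{B_j}$ and $g=\bigl(|B_l|\,\mu_{\vec\omega}/\mu_{\vec\omega}(B_l)\bigr)^{1/q}\chi_{B_l}$. This is the concrete content of your phrase ``design the pointwise magnitudes''; it makes each weighted norm explicitly computable (equations \eqref{lem4-2}--\eqref{lem4-4}), after which the $A_{\vec P,q}$ inequality and the $A_\infty$ doubling bound \eqref{weight-eq1} control the product. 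It is also why Lemma~\ref{lem-main1} is stated with an $L^q$ (rather than $L^\infty$) size hypothesis on each ball: the non-constant $g$ appearing in the remainder estimate \eqref{II1} forces this generalisation.
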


Our paper is organized as follows. In the next section, we give the definitions of some functional spaces and operators the expert reader can easily skip this part. In section \ref{s3}, we provide necessary auxiliary results. Section \ref{s4} is devoted to proving Theorems \ref{main1} and \ref{main2}. In the last Section, we provide a short discussion about the applications of the weak factorization theorems.

\section{Definitions and preliminaries}\label{s2}

Let $|E|$ denote the Lebesgue measure of a measurable set $E\subset \mathbb{R}^n$, and $\omega(E)=\int_{E}\omega(x)dx$. Throughout this paper, the letter $C$ denotes constants which are independent of main variables and may change from one occurrence to another. We denote $A\lesssim B$ if $A\leq CB$ for some constant that can depend on
the dimension, Lebesgue exponents, weight constants, and on various other constants
appearing in the assumptions.

\subsection{Multilinear Caldr\'{o}n-Zygmund operators}
The multilinear Calder\'{o}n-Zygmund theory was systematically formulated by Grafakos and Torres \cite{GT2002}. Recall that $m$-Calder\'{o}n-Zygmund operator $T$ is a bounded operator which satisfies
$$\|T(f_{1},\cdots, f_{m})\|_{L^{p}(\mathbb{R}^n)}\leq C\|f_{1}\|_{L^{p_{1}}(\mathbb{R}^n)}\cdots\|f_{m}\|_{L^{p_{m}}(\mathbb{R}^n)},$$
for some $1<p_{1},\cdots,p_{m}<\infty$ with $1/p=1/p_{1}+\cdots+1/p_{m}$ and the function $K$, defined off the diagonal $y_{0}=y_{1}=\cdots=y_{m}$ in $(\mathbb{R}^{n})^{m+1}$, satisfies the conditions as follow:

(1) The function $K$ satisfies the size condition.
$$|K(y_{0},y_{1},\cdots,y_{m})|
\leq\frac{C}{\big(\sum_{k=1}^{m}|y_{k}-y_{0}|\big)^{mn}}.$$

(2) The function $K$ satisfies the regularity condition. For some $\gamma>0$ and all $1\leq i\leq m$,
if $|y_{i}-y'_{i}|\leq \frac{1}{2}\max_{0\leq k\leq m}|y_{0}-y_{k}|$,
$$|K(y_{0},\cdots,y_{i},\cdots,y_{m})-K(y_{0},\cdots,y'_{i},\cdots,y_{m})|
\leq\frac{C|y_{i}-y'_{i}|^{\gamma}}{\big(\sum_{k=1}^{m}|y_{k}-y_{0}|\big)^{mn+\gamma}}.$$
Then we say $K$ is a $m$-linear Calder\'{o}n-Zygmund kernel. If $x\notin \bigcap_{i=1}^{m}{\rm supp} f_{i} $, then
$$T(f_{1},\cdots,f_{m})(x)=\int_{\mathbb{R}^{mn}}K(x,y_{1},\cdots,y_{m})f_{1}(y_{1})\cdots f_{m}(y_{m})dy_{1}\cdots dy_{m}.$$
The $l$-th partial adjoint of $T$ is
$$T^{*}_{l}(f_{1},\cdots,f_{m})(x)=\int_{\mathbb{R}^{mn}}K(y_{l},y_{1},\cdots,y_{l-1},x,y_{l},\cdots,y_{m})f_{1}(y_{1})\cdots f_{m}(y_{m})dy_{1}\cdots dy_{m}.$$

Now we define the $mn$-homogeneous Calder\'{o}n-Zygmund operators. We say that $T$ is $mn$-homogeneous if the kernel function $K$ satisfies
$$|K(x_{0},\cdots,x_{m})| \ge \frac{C}{N^{mn}}$$
 for $m+1$ balls $B_0=B_0(x_0,r),\cdots,B_m=B_m(x_m,r)$ satisfying $ |y_0-y_l |\approx{Nr} $ for $l=1,2,\cdots,m$ and for all $x\in B_0$, where $r>0$ and $N$ a large number.

The definitions of the general commutator and the linear commutator were given by P\'{e}rez and Torres in \cite{PT2003}, which coincides with the linear commutator $[b,T]$ when $m=1$. Suppose $T$ is a $m$-linear operator and $\vec{b}=(b_{1},\cdots,b_{m})$. Define the general $l$-th commutator of $T$ with $b_{l}$ by
$$[b_{l},T]_{l}(f_{1},\cdots,f_{m})(x):=b_{l}(x)T(f_{1},\cdots,f_{i},\cdots,f_{m})(x)-T(f_{1},\cdots,b_{l}f_{l},\cdots,f_{m})(x).$$

\subsection{Muckenhoupt weights.}

For $1<p<\infty,$ recall that the Muckenhoupt $A_{p}$ \cite{M1972} of weights consists of all non-negative, locally integrable, functions $\omega$ such that
$$\sup_{Q}\bigg(\frac{1}{|Q|}\int_{Q}\omega(x)dx\bigg)\bigg(\frac{1}{|Q|}\int_{Q}\omega(x)^{-\frac{1}{p-1}}dx\bigg)^{p-1}<\infty.$$
And a weight function $\omega$ belongs to the class $A_{1}$ if
$$\frac{1}{|Q|}\int_{Q}\omega(x)dx\Big(\mathop\mathrm{ess~sup}_{x\in Q}\omega(x)^{-1}\Big)<\infty.$$
While $A_{\infty}=\bigcup_{1<p<\infty}A_{p}$. For $\omega\in A_{\infty}$, there exists $0<\epsilon,L<\infty$ such that for all balls $B$ and all measurable subsets $E$ of $B$, we have
\begin{align}\label{weight-eq1}
  \frac{\omega(E)}{\omega(B)}\leq C\Big(\frac{|E|}{|B|}\Big)^{\epsilon} \qquad \text{and} \qquad  \Big( \frac{|E|}{|B|} \Big)^{L}\le C\frac{\omega(E)}{\omega(B)}.
\end{align}

The definition of the multiple or vector weights used in the multilinear setting was given by Lerner et.al. \cite{LOPTT2009}. For $1<p_1,\dots,p_m<\infty$, $\vec{P}=(p_1,p_2,\dots,p_m)$, and
	$p$ such that $\frac{1}{p_1}+\dots +\frac{1}{p_m}=\frac{1}{p}$, a vector weight $ \vec{\omega} =(\omega_1,\omega_2,\dots, \omega_m)$ belongs to $A_{\vec{P}}$ if
	\begin{align*}
		\underset{Q}{\sup} \Big(\frac{1}{|Q|}\int_{Q}\prod_{i=1}^{m}\omega_i(x)^{\frac{p}{p_i}}dx\Big)^{\frac{1}{p}}
     \prod_{i=1}^{m} \Big(\frac{1}{|Q|}\int_{Q}\omega_{i}(x)^{1-p_i^\prime} dx\Big)^{\frac{1}{p_i^{\prime}}}< \infty.
	\end{align*}
	For brevity, we will often use the notation $\nu_{\vec{\omega}}=\prod_{i=1}^{m} \omega_{i}^{\frac{p}{p_i}}$. Lerner et al. in \cite[Theorem 3.6]{LOPTT2009} showed that the multilinear $A_{\vec{P}}$ condition has the following interesting characterization in terms of the linear $A_{p}$ classes. The weight function $\vec{\omega}\in A_{\vec{P}}$ if and only if
\begin{equation*}
     \Bigg\{
   \begin{array}{ll}\vspace{1ex}
\omega_{i}^{1-p'_{i}}\in A_{mp'_{i}}, i=1,\cdots,m,\vspace{1ex}\\
\nu_{\vec{\omega}}\in A_{mp},
   \end{array}
\end{equation*}
where the condition $\omega_{i}^{1-p'_{i}}\in A_{mp'_{i}}$ in the case $p_{i}=1$ is understood as $\omega_{i}^{1/m}\in A_{1}$.

Let $1<p_1,\cdots,p_m<\infty$, $\vec{P}=(p_1,\cdots,p_m)$, $1/m<p\leq q<\infty$ and $\frac{1}{p}=\frac{1}{p_1}+\cdots+\frac{1}{p_m}$. A vector weight $\vec{\omega}=(\omega_1,\omega_2,\cdots,\omega_m)$ belongs to $A_{\vec{P},q}$ if $$[\vec{\omega}]_{A_{\vec{P},q}}=\sup_{Q}\Big(\frac{1}{|Q|}\int_Q\prod_{i=1}^{m}\omega_i(x)^qdx\Big)^{\frac{1}{q}}
\prod_{i=1}^{m}\Big(\frac{1}{|Q|}\int_Q\omega_i(x)^{-p_i^{\prime}}dx\Big)^{\frac{q}{p_i^{\prime}}}<\infty.$$
To avoid ambiguities in the notation we will use $\nu_{\vec{\omega}}$ when dealing with $A_{\vec{P}}$ classes and $\mu_{\vec{\omega}}$ with $A_{\vec{P},q}$ ones.
It was shown by Moen in \cite{M2009} that if $\vec{\omega}\in A_{\vec{P},q}$, then $\omega_{i}^{-p_i^{\prime}}\in A_{mp_i^{\prime}}$ and $\mu_{\vec{\omega}}\in A_{mq}$.

\subsection{The Campanato spaces}
Let $0<q<\infty$ and $-n/q<\alpha<1$. A locally integrable function $f$ is said to belong to Campanato space $\mathcal{C}_{\alpha,q}$ if there exists a constant
$C > 0$ such that for any cube $Q\subset \mathbb{R}^n$,
$$\frac{1}{|Q|^{\alpha/n}}\bigg(\frac{1}{|Q|}\int_{Q}|f(x)-m_{Q}(f)|^{q}dx\bigg)^{1/q}\leq C,$$
where $m_{Q}(f)=\frac{1}{|Q|}\int_{Q}f(x)dx$ and the minimal constant $C$ is defined by $\|f\|_{\mathcal{C}_{\alpha,q}(\mathbb{R}^n)}$.

Campanato spaces are a useful tool in the regularity theory of PDEs due to their better structures,
which allows us to give an integral characterization of the spaces of H\"{o}lder continuous functions when $0<\alpha<1$. The Lipschitz (H\"{o}lder) and Campanato spaces are related by the following equivalences:
$$\|f\|_{Lip_{\alpha}(\mathbb{R}^n)}:=\sup_{x,h\in \mathbb{R}^n,h\neq 0}\frac{|f(x+h)-f(x)|}{|h|^{\alpha}}\approx \|f\|_{\mathcal{C}_{\alpha,q}},\quad 0<\alpha<1.$$
The equivalence can be found in \cite{DS1984} for $q=1$, \cite{JTW1983} for $1<q<\infty$ and \cite{WZTmn} for $0<q<1$.

Specially, $\mathcal{C}_{0,q}(\mathbb{R}^n)=BMO(\mathbb{R}^n)$, the spaces of bounded mean oscillation. The crucial property of $BMO$ functions is the John-Nirenberg inequality \cite{JN1961},
$$|\{x\in Q: |f(x)-m_{Q}(f)|>\lambda\}|\leq c_{1}|Q|e^{-\frac{c_{2}\lambda}{\|f\|_{BMO}}(\mathbb{R}^n)},$$
where $c_{1}$ and $c_{2}$ depend only on the dimension. A well-known immediate corollary of the John-Nirenberg inequality as follows:
$$\|f\|_{BMO}\approx \sup_{Q}\frac{1}{|Q|}\Big(\int_{Q}|f(x)-m_{Q}(f)|^{p}dx\Big)^{1/p},$$
for all $1<p<\infty$. In fact, the equivalence also holds for $0<p<1$ (see \cite{WZTams}).

\subsection{Hardy spaces}
The theory of Hardy spaces is vast and complicated, it has been systematically developed and plays an important role in harmonic analysis and PDEs. A bounded tempered distribution $f$ is in the Hardy space $H^{s}(\mathbb{R}^n)$ if the Poisson maximal function
$$M(f;P)=\sup_{t>0}|(P_{t}*f)(x)|$$
lies in $L^{\rho}(\mathbb{R}^n)$. We first recall the atomic decomposition of Hardy spaces.
Let $0<s\leq 1\leq q\leq \infty, \rho\neq q$ and the nonnegtive integer $l\geq [n(\frac{1}{\rho}-1)]$ ($[x]$ indicates the integer part of $x$). A function $a\in L^{q}(\mathbb{R}^n)$ is called a $(\rho,q,l)$ atom for $H^\rho(\mathbb{R}^n)$ if there exists a cube $Q$ such that
\begin{itemize}
  \item [\rm (i)]  $a$ is supported in $Q$;
  \item [\rm (ii)] $\|a\|_{L^{q}(\mathbb{R}^n)}\leq |Q|^{\frac{1}{q}-\frac{1}{\rho}}$;
  \item [\rm (iii)] $\int_{\mathbb{R}^n} a(x)x^{\alpha}dx=0$ for all multi-indices $\alpha$ with $0\leq |\alpha|\leq l$.
  \end{itemize}
Here, $(\rm i)$ means that an atom must be a function with compact support,
$(\rm ii)$ is the size condition of atoms, and $(\rm iii)$ is called the cancellation moment
condition. Notice that since $\frac{n}{n+1}<\rho\leq 1$ in our case, then we have $l=0$.

The atomic Hardy space $H^{\rho,q,l}(\mathbb{R}^n)$ is defined by
\begin{align*}
H^{\rho,q,l}(\mathbb{R}^n)=\Big\{f\in \mathcal{S}': f=^{S'}\sum_{k}\lambda_{k}a_{k}(x), \text{each} ~~a_{k} ~~\text{is a} ~~(\rho,q,l)-\text{atom}, \text{and}~~ \sum_{k}|\lambda|^{\rho}<\infty\Big\}.
\end{align*}
Setting $H^{\rho,q,l}(\mathbb{R}^n)$ norm of $f$ by
$$\|f\|_{H^{\rho,q,l}(\mathbb{R}^n)}=\inf \big(\sum_{k}|\lambda_k|^{\rho}\big)^{1/\rho},$$
where the infimum is taken over all decompositions of $f=\sum_{k}\lambda_{k}a_{k}$ above. Note that $H^{\rho,q,l}(\mathbb{R}^n)=H^{\rho}(\mathbb{R}^n)$ was proved by Cofiman \cite{C1974} for $n=1$ and Latter \cite{L1978} for $n>1$. This indicates that each element in $H^{\rho}(\mathbb{R}^n)$ can be decomposed into a sum of atoms in certain way.

\section{Some auxiliary lemmas}\label{s3}

\qquad In this section, we will proceed with the proof of the following auxiliary lemmas, which we need in order to prove our main results.

Next, we recall a technical lemma about certain $H^{\rho}(\mathbb{R}^n)$ functions.

\begin{lemma} \label{lem-main1}
     Let $\frac{n}{n+1}<\rho<1$ and $f$ be a function satisfying the following estimates:
\begin{itemize}
  \item [\rm (i)]  $\int_{\mathbb{R}^n}f(x)dx=0;$
  \item [\rm (ii)] there exist balls $B_1=B(x_1,r)$ and $B_2=B(x_2,r)$ for some $x_1,x_2\in \mathbb{R}^n$ and $r>0$ such that
     $$|f(x)|\le h_1(x)\chi_{B_1}(x)+h_2(x)\chi_{B_2}(x),$$
     where $\|h_{i}\|_{L^{q}(\mathbb{R}^n)}\leq C|B_{i}|^{1/q-1/\rho}$ with $1<q\leq \infty$;
  \item [\rm (iii)] $|x_1-x_2|= Nr$.
  \end{itemize}
  Then, $f\in H^\rho(\mathbb{R}^n)$ and there exists a positive constant $C$ independent of $x_1,x_2,r$ such that
     $$  \|f \|_{H^\rho(\mathbb{R}^n)} \leq CN^{n(1/\rho-1)}.$$
      \end{lemma}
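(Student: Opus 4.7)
The plan is to write $f$ as a sum of three atoms from the atomic decomposition $H^{\rho,q,0}(\R^n)=H^\rho(\R^n)$: two ``small'' atoms supported in $B_1$ and $B_2$ with coefficients of size $O(1)$, and one ``correction'' atom supported in an enlarged ball of radius $\asymp Nr$ whose coefficient will produce the factor $N^{n(1/\rho-1)}$.

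First, split $f$ by its supporting balls. Set $f_i = f\chi_{B_i}$ and $m_i = \int_{\R^n} f_i(x)\,dx$, so that $f = f_1 + f_2$. Condition (i) gives $m_1 + m_2 = 0$, and H\"older's inequality together with (ii) yields
$$
|m_i|\le \|f_i\|_{L^1(\R^n)}\le \|h_i\|_{L^q(\R^n)}|B_i|^{1-1/q}\lesssim |B_i|^{1-1/\rho}.
$$
Then subtract and re-add the means to build mean-zero pieces on each ball: define
$$
a_i(x) = f_i(x) - \tfrac{m_i}{|B_i|}\chi_{B_i}(x),\qquad g(x) = \tfrac{m_1}{|B_1|}\chi_{B_1}(x) + \tfrac{m_2}{|B_2|}\chi_{B_2}(x),
$$
so that $f = a_1 + a_2 + g$. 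Each $a_i$ is supported in $B_i$, has vanishing integral, and the triangle inequality combined with $\|f_i\|_{L^q}\le |B_i|^{1/q-1/\rho}$ and $\tfrac{|m_i|}{|B_i|}|B_i|^{1/q}\lesssim |B_i|^{1/q-1/\rho}$ gives $\|a_i\|_{L^q(\R^n)}\lesssim |B_i|^{1/q-1/\rho}$; hence each $a_i$ is a $(\rho,q,0)$-atom up to a multiplier of size $\lesssim 1$.

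The decisive step is to control $g$. Since $m_1 = -m_2$, $g$ has vanishing integral, and by (iii) there is a ball $B^*$ of radius $\asymp Nr$ containing $B_1\cup B_2$. I would estimate $g$ using the $L^1$ atom condition rather than the $L^q$ one:
$$
\|g\|_{L^1(\R^n)}\le |m_1| + |m_2|\lesssim |B_1|^{1-1/\rho}\approx r^{n(1-1/\rho)},\qquad |B^*|^{1-1/\rho}\approx (Nr)^{n(1-1/\rho)},
$$
so that $g = \lambda\,\tilde g$ with $\tilde g$ a $(\rho,1,0)$-atom supported in $B^*$ and $|\lambda|\lesssim N^{n(1/\rho-1)}$. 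Summing via the $\rho$-subadditivity of $\|\cdot\|_{H^\rho}^\rho$ and the equivalence of the different atomic $H^\rho$ norms,
$$
\|f\|_{H^\rho(\R^n)}^\rho\lesssim 1 + 1 + N^{\rho n(1/\rho-1)}\lesssim N^{\rho n(1/\rho-1)}
$$
(using $N\gtrsim 1$ and $1/\rho > 1$, so $N^{n(1/\rho-1)}\gtrsim 1$), which gives the desired bound after taking $\rho$-th roots.

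The main --- indeed essentially the only --- pitfall is the choice of atom exponent for $g$: if one views $g$ as a $(\rho,q,0)$-atom on $B^*$, the comparison only yields $|\lambda|\lesssim N^{n(1/\rho - 1/q)}$, which is strictly weaker than the claimed $N^{n(1/\rho-1)}$ whenever $q>1$. The $L^1$ atom is the right one because H\"older on each small ball $B_i$ has already converted the $L^q$ bound on $h_i$ into the sharp $L^1$ control of $m_i$, allowing all of the gain from the large size of $B^*$ to be captured.
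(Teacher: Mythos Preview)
Your argument is correct and considerably more direct than the paper's. The paper proves the lemma via a dyadic telescoping: writing $f=f_1+f_2$ with $|f_i|\le h_i$ and $\operatorname{supp} f_i\subset B_i$, it sets $\alpha_i^j=m_{2^jB_i}(f_i)$ and $f_i^j=\alpha_i^{j-1}\chi_{2^{j-1}B_i}-\alpha_i^{j}\chi_{2^{j}B_i}$ for $j=1,\dots,J_0\approx\log N$, then handles the remaining tail by one further atom on a ball of radius $\approx 2^{J_0}r$. Each $f_i^j$ is a multiple of a $(\rho,q,0)$-atom with coefficient $\lesssim 2^{jn(1/\rho-1)}$, and summing $\sum_{j\le J_0}2^{jn(1-\rho)}\lesssim N^{n(1-\rho)}$ gives the bound. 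Your three-atom decomposition bypasses the telescoping entirely; the key insight you correctly isolate is that the correction piece $g$ should be measured as a $(\rho,1,0)$-atom on the large ball, which recovers the full exponent $n(1/\rho-1)$.

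Two small remarks. First, setting $f_i=f\chi_{B_i}$ tacitly assumes $B_1\cap B_2=\varnothing$; this holds once $N>2$, and for small $N$ the conclusion is trivial, but it is cleaner to split $f=f_1+f_2$ with $|f_i|\le h_i$ and $\operatorname{supp} f_i\subset B_i$ as the paper does (always possible, e.g.\ via $f_i=f\,h_i/(h_1+h_2)$ on the overlap). Second, the reason the paper takes the longer telescoping route is presumably uniformity with the companion case $\rho=1$ (their Lemma~3.2), where the sharp bound is $\log N$: there your method would only yield $N^{n(1-1/q)}$, since $(\rho,1,0)$-atoms are unavailable when $\rho=1$, whereas the telescoping produces $J_0\approx\log N$ atoms each with $O(1)$ coefficient. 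For $\rho<1$ your shortcut is the more economical argument.
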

\begin{proof}
Since $|f(x)|\le h_1(x)\chi_{B_1}(x)+h_2(x)\chi_{B_2}(x),$ then we can write
$$f:=f_1+f_2 \quad \text{with}\quad  |f_{i}|\leq h_{i}$$
and supp $f_{i}\subset B_i$ for $i=1,2.$ We will show that $f$ has the following atomic decomposition
\begin{align}\label{lem1-1}
    f=\sum_{i=1}^{2}\sum_{j=1}^{J_0+1}\lambda_i^ja_i^j,
\end{align}
where $J_0$ is the smallest integer larger than $\log N$.

Let $i=1,2$, $\alpha_i^0=f_i$ and
\begin{equation}\label{alpha1}
\alpha_i^j=m_{2^{j}B_i}(f_i),\qquad j=1,\cdots,J_0,
\end{equation}
with $|\alpha_{i}^{j}|=\frac{|B_{i}|}{|2^{j}B_{i}|}m_{B_{i}}(f_{i})\leq 2^{-jn}|B_{i}|^{-1/s}$. We also write
\begin{equation}\label{f1}
f_i^j=\alpha_i^{j-1}\chi_{2^{j-1}B_i}-\alpha_i^{j}\chi_{2^{j}B_i}.
\end{equation}
Which shows that $f_i^j$ satisfies the cancellation moment condition, and
\begin{equation}\label{alpha2}
\|f_{i}^{j}\|_{L^{q}(\mathbb{R}^n)}\lesssim |\alpha_{i}^{j}||2^{j}B_{i}|^{1/q}\lesssim 2^{-jn/q'}|B_{i}|^{1/q-1/\rho}.
\end{equation}

Now, we return to the proof. By the direct compute, we have
\begin{align*}
	f=\sum_{i=1}^{2}f_{i}-\sum_{i=1}^{2}\alpha_i^1\chi_{2B_i}+\sum_{i=1}^{2}\alpha_i^1\chi_{2B_i}=\sum_{i=1}^{2}(f_i-\alpha_i^1\chi_{2B_i})+\sum_{i=1}^{2}\alpha_i^1\chi_{2B_i}.
\end{align*}
From the definitions of \eqref{alpha1} and \eqref{f1}, it follows that $$f=\sum_{i=1}^{2}f_i^1+\sum_{i=1}^{2}\alpha_i^1\chi_{2B_i}.$$
Continuing in this process with
\begin{equation}\label{f2}
\begin{aligned}
	f=&\sum_{i=1}^{2}f_i^1+\sum_{i=1}^{2}\alpha_i^1\chi_{2B_i}-\alpha_i^2\chi_{2^2B_i}+\alpha_i^2\chi_{2^2B_i}\\
	=&\sum_{i=1}^{2}f_i^1+\sum_{i=1}^{2}f_i^2+\sum_{i=1}^{2}\alpha_i^2\chi_{2^2B_i}-\alpha_i^3\chi_{2^3B_i}+\alpha_i^3\chi_{2^3B_i}\\
	=&\sum_{i=1}^2(\sum_{j=1}^{J_0}f_i^j)+\sum_{i=1}^2\alpha_i^{J_0}\chi_{2^{J_0}B_i}.
\end{aligned}
\end{equation}
Now for $j=1,2,\cdots,J_0$, let
$$a_i^j=\frac{f_i^j}{\|f_i^j\|_{L^{q}(\mathbb{R}^n)}}|2^jB_i|^{1/q-1/\rho}.$$
Then, $a_i^j$ is a $(\rho,q,0)$-atom. In fact, it is easy to see
$$supp\ a_i^j\subseteq B(x_i,2^jr), \qquad \|a_i^j\|_{L^{q}(\mathbb{R}^n)}=|2^jB_i|^{1/q-1/\rho},$$
and
$$\int_{\mathbb{R}^n} f_i^j(x)dx=\int_{\mathbb{R}^n} (\alpha_i^{j-1}\chi_{B(x_i,2^{j-1}r)}-\alpha_i^{j}\chi_{B(x_i,2^{j}r)} )dx=0,$$
which gives us that
$$\int a_i^j(x)dx=\frac{|2^jB_i|^{1/q-1/\rho}}{\|f_i\|_{L^{q}(\mathbb{R}^n)}}\int_{\mathbb{R}^n} f_i^j(x)dx=0.$$

Therefore, the \eqref{f2} is rewritten as
\begin{equation}\label{f3}
\begin{aligned}
f=\sum_{i=1}^{n} \Big(\sum_{j=1}^{J_0}||f_i^j||_{L^{q}(\mathbb{R}^n)}|2^jB_i|^{1/\rho-1/q}a_i^j \Big)
+\sum_{i=1}^{2}\alpha_i^{J_0}\chi_{B(x_i,2^{J_0}r)}.
\end{aligned}
\end{equation}
Next, we will estimate
\begin{equation}\label{f4}
\begin{aligned}
\sum_{i=1}^2\alpha_i^{J_0}\chi_{2^{J_0}B_i}.
\end{aligned}
\end{equation}
In order to achieve this, we set $$\alpha^{J_0}=\frac{|B(x_1,r)|}{|B(\frac{x_1+x_2}{2},2^{J_0}r)|}m_{B_{1}}(f_1). $$
Notice that, since $f=f_1+f_2$, and $\int_{\mathbb{R}^n} f(x)dx=0,$ we also have
$$\alpha^{J_0}=-\frac{|B(x_2,r)|}{|B(\frac{x_1+x_2}{2},2^{J_0}r)|}m_{B_{2}}(f_2). $$
Combining with \eqref{f4} and the term $\alpha^{J_0}\chi_{B(\frac{x_1+x_2}{2},2^{J_0+1}r)}$, we get
\begin{equation}\label{f5}
\begin{aligned}
	\sum_{i=1}^{2}\alpha_i^{J_0}\chi_{2^{J_0}B_i}&=f_1^{J_0+1}+f_2^{J_0+1},
\end{aligned}
\end{equation}
where
$$f_i^{J_0+1}=\alpha_i^{J_0}\chi_{2^{J_{0}}B_i}-\alpha_i^{J_0}\chi_{B(\frac{x_1+x_2}{2},2^{J_0+1}r)}.$$
Meanwhile, it is easy to see that
$$a_i^{J_0+1}=\frac{f_i^{J_0+1}}{||f_i^{J_0+1}||_{L^{q}(\mathbb{R}^n)}}|B(\frac{x_1+x_2}{2},2^{J_0+1}r)|^{1/q-1/s},$$
is a $(\rho,q,0)$-atom.

Together with \eqref{f3} and \eqref{f5}, we have
$$f=\sum_{i=1}^{2}\sum_{j=1}^{J_0+1}\lambda_i^j a_i^j,$$
where
$$\lambda_i^j= \Bigg\{\begin{matrix}
	&||f_i^j||_{L^{q}(\mathbb{R}^n)}|2^jB_i|^{1/s-1/q}, & j=1,\cdots,J_0\\
    &\\
	&||f_i^j||_{L^{q}(\mathbb{R}^n)}|B(\frac{x_1+x_2}{2},2^{J_0+1}r)|^{1/s-1/q}, & j=J_0+1.
\end{matrix} .$$
Note that, for $j=1,\cdots,J_0+1$, by the inequality \eqref{alpha2}, we then have
\begin{align*}
	|\lambda_i^j|=||f_i^j||_{L^{q}(\mathbb{R}^n)}|2^jB_i|^{1/s-1/q}\leq C2^{jn(1/\rho-1)}.
\end{align*}
It implies that $f\in H^\rho(\mathbb{R}^n)$ and
\begin{align*}
	||f||^s_{H^s{(\mathbb{R}^n})}&\lesssim \sum_{i=1}^{2}\sum_{j=1}^{J_0+1}|\lambda_i^j|^\rho
	\lesssim \sum_{i=1}^{2}\sum_{j=1}^{J_0+1}2^{nj(1-\rho)}
	\lesssim \sum_{i=1}^{2}2^{(J_0+1)n(1-\rho)}	\lesssim N^{n(1-\rho)}.
\end{align*}
This finishes the proof of Lemma \ref{lem-main1}.
\end{proof}
\begin{remark}
The general case $h_{1}(x)=C_{1}$ and $h_{2}(x)=C_{2}$ was proved by Kuffner in \cite{K2018}. However, this case does not apply to the factorization theorems with the genuinely multilinear weights.
\end{remark}

For $\rho=1$,  we record the following lemma, which is also useful in the proof of our main results and is proved by Wang and Zhu in \cite{WZ2021}.
\begin{lemma} \label{lem-main11}
     Let $f$ be a function satisfying the following estimates:
\begin{itemize}
  \item [\rm (i)]  $\int_{\mathbb{R}^n}f(x)dx=0;$
  \item [\rm (ii)] there exist balls $B_1=B(x_1,r)$ and $B_2=B(x_2,r)$ for some $x_1,x_2\in \mathbb{R}^n$ and $r>0$ such that
     $$|f(x)|\le h_1(x)\chi_{B_1}(x)+h_2(x)\chi_{B_2}(x),$$
     where $\|h_{i}\|_{L^{q}(\mathbb{R}^n)}\leq C|B_{i}|^{1/q-1}$ with $1<q\leq \infty$;
  \item [\rm (iii)] $|x_1-x_2|= Nr$.
  \end{itemize}
  Then, $f\in H^1(\mathbb{R}^n)$ and there exists a positive constant $C$ independent of $x_1,x_2,r$ such that
     $$  \|f \|_{H^1(\mathbb{R}^n)} \leq C\log N.$$
      \end{lemma}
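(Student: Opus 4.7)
The plan is to mimic the constructive telescoping atomic decomposition used in the proof of Lemma \ref{lem-main1}, taking advantage of the fact that at the endpoint $\rho=1$ the exponent $n(1/\rho-1)$ that produced the scale-dependent factor $2^{jn(1/\rho-1)}$ in that proof becomes zero. Consequently, the geometric series $\sum_j 2^{jn(1-\rho)}$ appearing there degenerates into an arithmetic sum of length $J_0+1\approx\log_{2}N+1$, yielding the logarithmic bound $\log N$ in place of the polynomial bound $N^{n(1/\rho-1)}$.

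Concretely, write $f=f_1+f_2$ with $|f_i|\leq h_i$ and $\supp f_i\subset B_i$, let $J_0$ be the smallest integer exceeding $\log_{2}N$, and for $i=1,2$ and $j=1,\dots,J_0$ set $\alpha_i^0=f_i$, $\alpha_i^j=m_{2^jB_i}(f_i)$, and $f_i^j=\alpha_i^{j-1}\chi_{2^{j-1}B_i}-\alpha_i^j\chi_{2^jB_i}$. Telescoping in $j$ produces the identity $f=\sum_{i=1}^{2}\sum_{j=1}^{J_0}f_i^j+\sum_{i=1}^{2}\alpha_i^{J_0}\chi_{2^{J_0}B_i}$. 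Each $f_i^j$ has vanishing integral and is supported in $2^jB_i$, so after normalisation $a_i^j:=f_i^j/(\|f_i^j\|_{L^q}|2^jB_i|^{1/q-1})$ is a $(1,q,0)$-atom with coefficient $\lambda_i^j=\|f_i^j\|_{L^q}|2^jB_i|^{1-1/q}$.

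The key quantitative step is the observation that, from $\|h_i\|_{L^q}\leq C|B_i|^{1/q-1}$, H\"older's inequality yields $\int_{B_i}|f_i|\leq\int_{B_i}h_i\leq C$, whence $|\alpha_i^j|\lesssim 2^{-jn}|B_i|^{-1}$ and in turn $\|f_i^j\|_{L^q}\lesssim 2^{-jn/q'}|B_i|^{1/q-1}$. The factors $2^{-jn/q'}$ and $|2^jB_i|^{1-1/q}=2^{jn/q'}|B_i|^{1-1/q}$ then cancel exactly, giving the scale-independent estimate $|\lambda_i^j|\lesssim 1$ uniformly in $i$ and $j$. The residual $\sum_i\alpha_i^{J_0}\chi_{2^{J_0}B_i}$ is absorbed into a single enclosing ball $B_0:=B((x_1+x_2)/2,2^{J_0+1}r)$, which contains both $2^{J_0}B_i$ since $|x_1-x_2|=Nr\lesssim 2^{J_0}r$; the global cancellation $\int f=0$ forces the two candidates $\alpha^{J_0}:=|B_0|^{-1}|B_1|m_{B_1}(f_1)$ and $-|B_0|^{-1}|B_2|m_{B_2}(f_2)$ to coincide, so each $f_i^{J_0+1}:=\alpha_i^{J_0}\chi_{2^{J_0}B_i}-\alpha^{J_0}\chi_{B_0}$ also normalises to a $(1,q,0)$-atom with $|\lambda_i^{J_0+1}|\lesssim 1$.

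Summing up, $\|f\|_{H^1(\mathbb{R}^n)}\leq\sum_{i=1}^{2}\sum_{j=1}^{J_0+1}|\lambda_i^j|\lesssim J_0+1\lesssim\log N$. The main technical point — essentially the only place needing care beyond transcribing the proof of Lemma \ref{lem-main1} — is verifying that the coefficients $|\lambda_i^j|$ remain $O(1)$ uniformly in $j$ even though $h_i$ is only controlled in $L^q$ rather than pointwise; this is exactly what the H\"older bound on $\int_{B_i}h_i$ delivers, and it is the structural reason the telescoping argument works cleanly at the endpoint $\rho=1$ and produces precisely a $\log N$ growth.
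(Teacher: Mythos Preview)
Your proposal is correct and is precisely the $\rho=1$ specialisation of the paper's own proof of Lemma~\ref{lem-main1}; the paper itself does not reprove this case but simply records it with a citation to \cite{WZ2021}, and your adaptation is exactly the argument one expects there. The only cosmetic point is the sign in the residual step: to have both $\sum_i f_i^{J_0+1}=\sum_i\alpha_i^{J_0}\chi_{2^{J_0}B_i}$ and $\int f_i^{J_0+1}=0$, one should take $f_1^{J_0+1}=\alpha_1^{J_0}\chi_{2^{J_0}B_1}-\alpha^{J_0}\chi_{B_0}$ and $f_2^{J_0+1}=\alpha_2^{J_0}\chi_{2^{J_0}B_2}+\alpha^{J_0}\chi_{B_0}$ (opposite signs on the $B_0$ correction), which is a harmless slip also present in the paper's write-up of Lemma~\ref{lem-main1} and does not affect the estimates.
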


\begin{lemma} \label{lem-main2}
Suppose  $1\le l\le m$, $1< p, p_{1},\dots,p_{m}<\infty$ with
$$\frac{1}{p_{1}}+ \dots +\frac{1}{p_{m}}=\frac{1}{p}$$
and $\vec{\omega}\in A_{\vec{P}}$. For any $g\in L^{p^{\prime}}(\nu^{1-p^{\prime}}_{\vec{\omega}})$ and $h_{i}\in L^{p_{i}}(\omega_i), i=1,2,\dots ,m$, we have
\begin{align*}	
\|\Pi_{l} (g,h_{1},\ldots,h_{m} ) \|_{H^{1} (\mathbb{R}^n )}\lesssim \|g\|_{L^{p^{\prime}}(\nu^{1-p^{\prime}}_{\vec{\omega}})} \|h_{1} \|_{L^{p_{1}}(\omega)_1}\cdots \|h_{m} \|_{L^{p_{m}}(\omega_m)}.
\end{align*}
\end{lemma}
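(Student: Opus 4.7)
The plan is to argue via the duality between $H^1(\mathbb{R}^n)$ and $BMO(\mathbb{R}^n)$, which reduces the $H^1$ bound to a weighted estimate for a multilinear commutator. Specifically, I would first verify (using unweighted Hölder together with the standard $L^{q_1}\times\cdots\times L^{q_m}\to L^q$ boundedness of $T$ and $T_l^*$ on suitable approximations of $g,h_1,\dots,h_m$ by bounded compactly supported functions) that $\Pi_l(g,h_1,\dots,h_m)\in L^1(\mathbb{R}^n)$, so that the Fefferman--Stein duality characterization
\begin{align*}
\|\Pi_l(g,h_1,\dots,h_m)\|_{H^1(\mathbb{R}^n)}\approx \sup_{\|b\|_{BMO}\leq 1}\bigl|\langle \Pi_l(g,h_1,\dots,h_m),b\rangle\bigr|
\end{align*}
may be applied legitimately.

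Next, for $b\in BMO$ with $\|b\|_{BMO}\leq 1$, I would unfold the definition of $\Pi_l$ and use the defining property of the $l$-th partial adjoint, namely
\begin{align*}
\int T_l^*(h_1,\dots,g,\dots,h_m)(x)\,\varphi(x)\,dx=\int T(h_1,\dots,\varphi,\dots,h_m)(x)\,g(x)\,dx
\end{align*}
with $\varphi = bh_l$, to obtain the identity
\begin{align*}
\langle \Pi_l(g,h_1,\dots,h_m),b\rangle
=-\int_{\mathbb{R}^n} g(x)\,[b,T]_l(h_1,\dots,h_m)(x)\,dx.
\end{align*}
Then an application of the weighted Hölder inequality with respect to the weight $\nu_{\vec{\omega}}$ (using that $L^{p'}(\nu_{\vec{\omega}}^{1-p'})$ is the dual of $L^p(\nu_{\vec{\omega}})$ under the unweighted pairing) gives
\begin{align*}
\bigl|\langle \Pi_l(g,\vec h),b\rangle\bigr|\leq \|g\|_{L^{p'}(\nu_{\vec{\omega}}^{1-p'})}\,\|[b,T]_l(\vec h)\|_{L^p(\nu_{\vec{\omega}})}.
\end{align*}

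The last ingredient is the known weighted estimate for commutators of multilinear Calder\'on--Zygmund operators with $BMO$ symbols (Lerner--Ombrosi--P\'erez--Torres--Trujillo-Gonz\'alez): for $\vec{\omega}\in A_{\vec{P}}$ and $b\in BMO$,
\begin{align*}
\|[b,T]_l(h_1,\dots,h_m)\|_{L^p(\nu_{\vec{\omega}})}\lesssim \|b\|_{BMO}\prod_{i=1}^{m}\|h_i\|_{L^{p_i}(\omega_i)}.
\end{align*}
Plugging this back in and taking supremum over $b$ with $\|b\|_{BMO}\leq 1$ yields the claimed $H^1$ bound.

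The main obstacle, and the reason a genuinely multilinear weighted framework is needed here, is the last step: one cannot treat the variables separately because the hypothesis only provides $\vec{\omega}\in A_{\vec{P}}$ (not $\omega_i\in A_{p_i}$ individually), so the weighted commutator estimate has to hold under the full multilinear $A_{\vec{P}}$ condition. A secondary technicality is the a priori $L^1$ membership of $\Pi_l(g,\vec h)$ needed to justify the duality step; this is handled by a standard density/truncation argument since the pairing identity above is stable under such approximations.
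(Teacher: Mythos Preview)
Your proposal is correct and follows essentially the same route as the paper: both establish the $H^1$ bound via $H^1$--$BMO$ duality, the pairing identity $\langle b,\Pi_l(g,\vec h)\rangle=-\langle g,[b,T]_l(\vec h)\rangle$, weighted H\"older, and the Lerner--Ombrosi--P\'erez--Torres--Trujillo-Gonz\'alez commutator estimate under the full $A_{\vec P}$ condition. The only noteworthy difference is in the preliminary $L^1$ verification: the paper argues directly from the weighted mapping properties of $T$ and $T_l^*$ (computing the dual weight relation $\omega_l^{1-p_l'}=\nu_{\vec\omega}^{-p_l'/p}\prod_{i\ne l}\omega_i^{p_l'/p_i}$ to place $T_l^*$ in the appropriate weighted space), which is cleaner than the density/truncation you sketch and avoids any approximation subtleties.
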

\begin{proof}
	Note that $T$ is bounded from $L^{p_{1}}(\omega_{1})\times \cdots\times \cdots L^{p_{m}}(\omega_{m})$ to $L^{p}(\nu_{\vec{\omega}})$. For any $g\in L^{p^{\prime}}(\nu^{1-p^{\prime}}_{\vec{\omega}})$ and $h_{i}\in L^{p_{i}}(\omega_i), i=1,2,\dots ,m$, we have
\begin{equation}\label{L1}
\begin{aligned}
\int_{\mathbb{R}^n}|g(x)T(h_{1},\cdots,h_{m})(x)|dx&=\int_{\mathbb{R}^n}|g(x)|\nu_{\vec{\omega}}(x)^{-\frac{1}{p}}\cdot |T(h_{1},\cdots,h_{m})(x)|\nu_{\vec{\omega}}(x)^{\frac{1}{p}}dx\\
&\leq\|g\|_{L^{p^{\prime}}(\nu_{\vec{\omega}}^{1-p'})}\|T(h_{1},\cdots,h_{m})\|_{L^{p}(\nu_{\vec{\omega}})}\\
&\lesssim \|g\|_{L^{p^{\prime}}(\nu_{\vec{\omega}}^{1-p'})}\prod_{i=1}^{m}\|h_{i}\|_{L^{p_{i}}(\omega_i)}.
\end{aligned}
\end{equation}
On the other hand, the directly calculation gives us that
\begin{align*}
	\frac{1}{p_l^{\prime}}=\sum_{j\ne l}\frac{1}{p_j}+\frac{1}{p'} \qquad and \qquad \omega_{l}^{1-p'_{l}}=\nu_{\vec{\omega}}^{-\frac{p_{l}}{p}}\prod_{i\neq l}\omega_{i}^{\frac{p_{l}}{p_i}},
\end{align*}
which implies that
\begin{align*}
T^*_l: L^{p_1}(\omega_1)\times\cdots\times L^{p_{l-1}}(\omega_{l-1})\times L^{p'}(\nu_{\vec{\omega}}^{1-p'})\times L^{p_{l+1}}(\omega_{l+1})\times\cdots\times L^{p_m}(\omega_m) \rightarrow L^{p_l^{\prime}}(\omega_{l}).
\end{align*}
and
\begin{equation}\label{L2}
\begin{aligned}
&\int_{\mathbb{R}^n}\Big|h_{l}(x)T^*_l(h_{1},\cdots,h_{l-1},g,h_{l+1},\cdots,h_{m})(x)\Big|dx\\
&=\int_{\mathbb{R}^n}|h_{l}(x)|\omega_{l}(x)^{\frac{1}{p_{l}}}\cdot\big|T^*_l(h_{1},\cdots,h_{l-1}(x),g(x),h_{l+1}(x),\cdots,h_{m})(x)\big|\omega_{l}(x)^{-\frac{1}{p_{l}}}dx\\
&\lesssim \|g\|_{L^{p^{\prime}}(\nu_{\vec{\omega}}^{1-p'})}\prod_{i=1}^{m}\|h_{i}\|_{L^{p_{i}}(\omega_i)}.
\end{aligned}
\end{equation}
By \eqref{L1} and \eqref{L2}, we conclude that ${\textstyle\prod_{l}}(g,h_1,\cdots,h_m)(x)\in L^{1}(\mathbb{R}^n)$. On the other hand,
$$\int_{\mathbb{R}^n}{\textstyle\prod_{l}}(g,h_1,\cdots,h_m)(x)dx=0.$$
Hence, for $b\in BMO(\mathbb{R}^n)$,
\begin{align*}
	 &\Big|\int_{\mathbb{R}^n}b(x){\textstyle\prod_{l}}(g,h_1,\cdots,h_m)(x)dx \Big|\\
	&= \Big|\int_{\mathbb{R}^n}g(x)[b,T]_l(h_1,\cdots,h_m)(x)dx \Big|\\
	&= \Big|\int_{\mathbb{R}^n}g(x)\nu_{\vec{\omega}}(x)^{-1/p}[b,T]_l(h_1,\cdots,h_m)(x)\nu_{\vec{\omega}}(x)^{1/p}dx \Big|\\
	&\le||g||_{L^{p'}(\nu_{\vec{\omega}}^{1-p'})}\cdot||[b,T]_l(h_1,\cdots,h_m)||_{L^p(\nu_{\vec{\omega}})}\\
	&\lesssim ||h_1||_{L^{p_1}(\omega_1)}\cdots||h_m||_{L^{p_m}(\omega_m)}||g||_{L^{p'}(\nu_{\vec{\omega}}^{1-p'})}||b||_{BMO(\mathbb{R}^n)}.
\end{align*}
Therefore, $\textstyle\prod_{l}(g,h_1,\cdots,h_m)$ is in $H^1(\mathbb{R}^n)$ with
$$||\textstyle\prod_{l}(g,h_1,\cdots,h_m)||_{H^1(\mathbb{R}^n)}\lesssim ||g||_{L^{p'}(\nu_{\vec{\omega}}^{1-p'})}||h_1||_{L^{p_1}(\omega_1)}\cdots||h_m||_{L^{p_m}(\omega_m)}.$$
Thus, the proof of Lemma \ref{lem-main2} is completed.
\end{proof}

\begin{lemma} \label{lem-main22}
Suppose  $1\le l\le m$, $\frac{1}{n+1}<\rho<1$, $1< p_{1},\dots,p_{m},q<\infty$ with
$$\frac{1}{p_{1}}+ \dots +\frac{1}{p_{m}}+1=\frac{1}{\rho}+\frac{1}{q}.$$
and $\vec{\omega}\in A_{\vec{P},q}$. For any $g\in L^{q^{\prime}}(\mu^{1-q^{\prime}}_{\vec{\omega}})$ and $h_{i}\in L^{p_{i}}(\omega_i^{p_{i}}), i=1,2,\dots ,m$, we have
\begin{align*}	
\|\Pi_{l} (g,h_{1},\ldots,h_{m} ) \|_{H^{\rho} (\mathbb{R}^n )}\lesssim \|g\|_{L^{q^{\prime}}(\mu^{1-q^{\prime}}_{\vec{\omega}})} \|h_{1} \|_{L^{p_{1}}(\omega_1^{p_{1}})}\cdots \|h_{m} \|_{L^{p_{m}}(\omega_m^{p_{m}})}.
\end{align*}
\end{lemma}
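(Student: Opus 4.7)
The plan is to mimic the proof of Lemma \ref{lem-main2}, replacing the $H^{1}$--$BMO$ duality by the $H^{\rho}$--$Lip_{\alpha}$ duality with $\alpha = n(1/\rho-1)\in(0,\gamma)$ (which lies in $(0,1)$ because $\rho>n/(n+\gamma)$ and, for Calder\'on--Zygmund purposes, $\gamma\le 1$), and replacing the $BMO$-commutator bound used there by the analogous bound for multilinear commutators with Lipschitz symbols.

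First I would verify that $\Pi_{l}(g,\vec{h})\in L^{1}(\mathbb{R}^{n})$ with $\int_{\mathbb{R}^{n}}\Pi_{l}(g,\vec{h})\,dx=0$. The cancellation is formal, obtained by Fubini from the defining identity of the partial adjoint $T_{l}^{*}$. The integrability is the analogue of \eqref{L1}--\eqref{L2}: by Moen's result \cite{M2009} recalled in Section \ref{s2}, $T$ is bounded $\prod_{i}L^{p_{i}}(\omega_{i}^{p_{i}})\to L^{q}(\mu_{\vec{\omega}}^{q})$, so a weighted H\"older inequality in the pair $(q,q')$ with weight $\mu_{\vec{\omega}}$ controls $\int|gT(\vec{h})|$; the Hölder-type identity
\[
\tfrac{1}{p_{l}'}=\sum_{j\ne l}\tfrac{1}{p_{j}}+\tfrac{1}{q'},
\qquad
\omega_{l}^{-p_{l}'}=\mu_{\vec{\omega}}^{-q/q}\prod_{i\ne l}\omega_{i}^{p_{l}'\cdot p_{i}/\text{(something)}},
\]
(the precise bookkeeping mirrors what was used for $T_{l}^{*}$ in Lemma \ref{lem-main2}) gives the corresponding boundedness for $T_{l}^{*}$, which handles $\int|h_{l}T_{l}^{*}(\ldots,g,\ldots)|$.

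Next, for any $b\in Lip_{\alpha}$ with $\alpha=n(1/\rho-1)$, the definition of the partial adjoint yields the pairing
\[
\int_{\mathbb{R}^{n}} b(x)\,\Pi_{l}(g,\vec{h})(x)\,dx
= \int_{\mathbb{R}^{n}} g(x)\,[b,T]_{l}(\vec{h})(x)\,dx,
\]
after which a weighted H\"older inequality gives
\[
\Bigl|\int b\,\Pi_{l}(g,\vec{h})\,dx\Bigr|
\le \|g\|_{L^{q'}(\mu_{\vec{\omega}}^{1-q'})}\,
\bigl\|[b,T]_{l}(\vec{h})\bigr\|_{L^{q}(\mu_{\vec{\omega}})}.
\]
At this stage the key input is the weighted boundedness of the multilinear commutator with Lipschitz symbol,
\[
\bigl\|[b,T]_{l}(\vec{h})\bigr\|_{L^{q}(\mu_{\vec{\omega}})}
\lesssim \|b\|_{Lip_{\alpha}}\prod_{i=1}^{m}\|h_{i}\|_{L^{p_{i}}(\omega_{i}^{p_{i}})},
\]
which is the natural $A_{\vec{P},q}$ analogue of the multilinear $BMO$ estimate used in the proof of Lemma \ref{lem-main2}. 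The indices are consistent because our hypothesis $\sum 1/p_{i}+1=1/\rho+1/q$ rearranges to $1/p-1/q=\alpha/n$, the scaling of a fractional-order operator.

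Finally, the conclusion follows from the $H^{\rho}$--$Lip_{n(1/\rho-1)}$ duality: taking the supremum over $b$ with $\|b\|_{Lip_{\alpha}}\le 1$ controls $\|\Pi_{l}(g,\vec{h})\|_{H^{\rho}}$ by the product $\|g\|_{L^{q'}(\mu_{\vec{\omega}}^{1-q'})}\prod_{i}\|h_{i}\|_{L^{p_{i}}(\omega_{i}^{p_{i}})}$. I expect the main obstacle to be the weighted Lipschitz commutator estimate displayed above; once it is invoked (or proved separately, following the tradition of \cite{LMOarXiv, M2009}), the rest is a routine combination of weighted H\"older and duality. A minor technical point worth checking is that the pairing $\int b\,\Pi_{l}(g,\vec{h})\,dx$ realizes the $H^{\rho}$--$Lip_{\alpha}$ duality for our specific function $\Pi_{l}(g,\vec{h})$; this is ensured by the $L^{1}$-integrability and vanishing mean established in the first step, since such functions may be approximated by finite atomic sums on which the duality pairing is unambiguous.
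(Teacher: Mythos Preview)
Your overall architecture coincides with the paper's: show integrability and mean zero of $\Pi_{l}(g,\vec{h})$, rewrite the pairing with $b$ as $\int g\,[b,T]_{l}(\vec{h})$, apply weighted H\"older, bound the commutator, and close with the $H^{\rho}$--$Lip_{\alpha}$ duality. Two points, however, need correction.

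First, your integrability step is miscalculated. The operator $T$ is a Calder\'on--Zygmund operator, not a fractional one, so Moen's $A_{\vec P,q}$ theorem does not give $T:\prod_i L^{p_i}(\omega_i^{p_i})\to L^{q}(\mu_{\vec\omega})$. What one has (and what the paper proves, by observing that $\vec W=(\omega_1^{p_1},\dots,\omega_m^{p_m})\in A_{\vec P}$) is $T:\prod_i L^{p_i}(\omega_i^{p_i})\to L^{p}(\mu_{\vec\omega}^{p/q})$. Likewise your index identity $1/p_l'=\sum_{j\ne l}1/p_j+1/q'$ is the $\rho=1$ relation carried over verbatim; under the present hypothesis it becomes $1/\tilde p:=\sum_{j\ne l}1/p_j+1/q'$ with $1/\tilde p+1/p_l=1/\rho$, so the adjoint term lands in $L^{\tilde p}(\omega_l^{-\tilde p})$, not $L^{p_l'}$. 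With these corrected targets, H\"older yields only $\Pi_{l}(g,\vec{h})\in L^{\rho}$, not $L^{1}$; the paper proceeds with $L^{\rho}$ and uses the exponent relation $1/p+1/q'=1/\rho$ (respectively $1/\tilde p+1/p_l=1/\rho$) to get there.

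Second, the ``main obstacle'' you flag---the weighted Lipschitz commutator bound---is dispatched in the paper by the elementary pointwise domination
\[
|[b,T]_{l}(f_{1},\dots,f_{m})(x)|\lesssim \|b\|_{Lip_{\alpha}}\,I_{\alpha,m}(|f_{1}|,\dots,|f_{m}|)(x),
\]
which follows directly from the size condition on $K$, after which Moen's weighted estimate for $I_{\alpha,m}$ under $A_{\vec P,q}$ gives exactly $\|[b,T]_{l}(\vec h)\|_{L^{q}(\mu_{\vec\omega})}\lesssim\|b\|_{Lip_{\alpha}}\prod_i\|h_i\|_{L^{p_i}(\omega_i^{p_i})}$. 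So no separate commutator theorem needs to be invoked.
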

\begin{proof}

Let $W_{i}=\omega_{i}^{p_{i}}$, $\vec{W}=(W_{1},\cdots,W_{m})$ and $\frac{1}{p}=\frac{1}{p_{1}}+ \dots +\frac{1}{p_{m}}$. Therefore,
$$\nu_{\vec{W}}=\prod_{i=1}^{m} W_{i}^{\frac{p}{p_i}}=\prod_{i=1}^{m}\omega_{i}^{p}=\mu_{\vec{\omega}}^{p/q}.$$
Thus, $\vec{\omega}\in A_{\vec{P},q}$ is equivalent to
$$\sup_{B}\Big(\frac{1}{|B|}\int_{B}\nu_{\vec{W}}^{\frac{q}{p}}dx\Big)^{\frac{1}{q}}\prod_{i=1}^{m}\Big(\frac{1}{|B|}\int_{B}W_{i}^{1-p'_{i}}dx\Big)^{\frac{1}{p'_{i}}}<\infty.$$
In other words, $\vec{W}\in A_{\vec{P}}$ by H\"{o}lder inequality. Which shows that $T$ is bounded from $L^{p_{1}}(\omega_{1}^{p_{i}})\times \cdots\times \cdots L^{p_{m}}(\omega_{m}^{p_{m}})$ to $L^{p}(\mu_{\vec{\omega}}^{p/q})$. For any $g\in L^{q^{\prime}}(\mu^{1-q^{\prime}}_{\vec{\omega}})$ and $h_{i}\in L^{p_{i}}(\omega_i^{p_{i}}), i=1,2,\dots ,m$, we have
Combining the above estimates and Young inequality with $\frac{1}{p}+\frac{1}{q'}=\frac{1}{\rho}$, we get
\begin{equation}\label{L1}
\begin{aligned}
&\Big(\int_{\mathbb{R}^n}|g(x)T(h_{1},\cdots,h_{m})(x)|^{\rho}dx\Big)^{1/\rho}\\
&=\int_{\mathbb{R}^n}|g(x)|\mu_{\vec{\omega}}(x)^{-\frac{1}{q}}\cdot |T(h_{1},\cdots,h_{m})(x)|\mu_{\vec{\omega}}(x)^{\frac{1}{q}}dx\\
&\leq\|g\|_{L^{q^{\prime}}(\mu_{\vec{\omega}}^{1-q'})}\|T(h_{1},\cdots,h_{m})\|_{L^{p}(\mu_{\vec{\omega}}^{p/q})}\\
&\lesssim \|g\|_{L^{q^{\prime}}(\mu_{\vec{\omega}}^{1-q'})}\prod_{i=1}^{m}\|h_{i}\|_{L^{p_{i}}(\omega_i^{p_{i}})}.
\end{aligned}
\end{equation}
Meanwhile, we further write
$$\frac{1}{\tilde{p}}:=\sum_{j\ne l}\frac{1}{p_j}+\frac{1}{q'}.$$
the directly calculation gives us that
\begin{align*}
	\frac{1}{\tilde{p}}+\frac{1}{p_{l}}=\frac{1}{\rho} \qquad and \qquad
\omega_{l}^{-\tilde{p}}=\big(\mu_{\vec{\omega}}^{1-q'}\big)^{\tilde{p}/q'}\prod_{i\neq l}\omega_{i}^{\tilde{p}},
\end{align*}
which implies that
\begin{align*}
T^*_l: L^{p_1}(\omega_1^{p_{1}})\times\cdots\times L^{p_{l-1}}(\omega_{l-1}^{p_{l-1}})\times L^{q'}(\mu_{\vec{\omega}}^{1-q'})\times L^{p_{l+1}}(\omega_{l+1}^{p_{l+1}})\times\cdots\times L^{p_m}(\omega_m^{p_{m}}) \rightarrow L^{\tilde{p}}(\omega_{l}^{-\tilde{p}}).
\end{align*}
By Young inequality with $\frac{1}{\tilde{p}}+\frac{1}{p_{l}}=\frac{1}{\rho}$, we get
\begin{equation}\label{L2}
\begin{aligned}
&\Big(\int_{\mathbb{R}^n}\big|h_{l}(x)T^*_l(h_{1},\cdots,h_{l-1},g,h_{l+1},\cdots,h_{m})(x)\big|^{\rho}dx\Big)^{1/\rho}\\
&=\Big(\int_{\mathbb{R}^n}\big|h_{l}(x)|\omega_{l}(x)\cdot T^*_l(h_{1},\cdots,h_{l-1},g,h_{l+1},\cdots,h_{m})(x)\omega_{l}(x)^{-1}\big|^{\rho}dx\Big)^{1/\rho}\\
&\leq \|h_{l}\|_{L^{p_{i}}(\omega_l^{p_{l}})}  \|T^*_l(h_{1},\cdots,h_{l-1},g,h_{l+1},\cdots,h_{m})\|_{L^{\tilde{p}}(\omega_{l}^{-\tilde{p}})}\\
&\lesssim \|g\|_{L^{p^{\prime}}(\nu_{\vec{\omega}}^{1-p'})}\prod_{i=1}^{m}\|h_{i}\|_{L^{p_{i}}(\omega_i)}.
\end{aligned}
\end{equation}
By \eqref{L1} and \eqref{L2}, we conclude that ${\textstyle\prod_{l}}(g,h_1,\cdots,h_m)(x)\in L^{\rho}(\mathbb{R}^n)$. On the other hand,
$$\int_{\mathbb{R}^n}{\textstyle\prod_{l}}(g,h_1,\cdots,h_m)(x)dx=0.$$
Due to
$$\big|[b,T]_{l}(f_{1},\cdots,f_{m})(x)\big|\lesssim \|b\|_{Lip_{\alpha}(\mathbb{R}^n)}I_{\alpha,m}(|f_{1}|,\cdots,|f_{m}|)(x),$$
where $b\in Lip_{\alpha}(\mathbb{R}^n)$ and $I_{\alpha,m}$ is the multilinear fractional integral operator
$$I_{\alpha,m}(f_{1},\cdots,f_{m})(x)=\int_{(\mathbb{R}^n)^m}\frac{f_{1}(y_{1}\cdots f_{m}(y_{m}))}{(|x-y_{1}|+\cdots+|x-y_{m}|)^{mn-\alpha}}dy_{1}\cdots dy_{m}.$$
Now, we return to the proof. For $b\in Lip_{\alpha}(\mathbb{R}^n)$ with $\alpha=n(\frac{1}{\rho}-1)$,
\begin{align*}
	 &\Big|\int_{\mathbb{R}^n}b(x){\textstyle\prod_{l}}(g,h_1,\cdots,h_m)(x)dx \Big|\\
	&= \Big|\int_{\mathbb{R}^n}g(x)[b,T]_l(h_1,\cdots,h_m)(x)dx \Big|\\
	&= \Big|\int_{\mathbb{R}^n}g(x)\mu_{\vec{\omega}}^{-1/q}[b,T]_l(h_1,\cdots,h_m)(x)\mu_{\vec{\omega}}^{1/q}dx \Big|\\
	&\lesssim \|g\|_{L^{q'}(\nu_{\vec{\omega}}^{1-q'})}\cdot\|I_{\alpha}(|h_1|,\cdots,|h_m)|\|_{L^q(\mu_{\vec{\omega}})}\|b\|_{Lip_{\alpha}(\mathbb{R}^n)}\\
	&\lesssim \|h_1\|_{L^{p_1}(\omega_1^{p_{1}})}\cdots\|h_m\|_{L^{p_m}(\omega_m^{p_{m}})}\|g\|_{L^{q'}(\mu_{\vec{\omega}}^{1-p'})}\|b\|_{Lip_{\alpha}(\mathbb{R}^n)}.
\end{align*}
Therefore, $\textstyle\prod_{l}(g,h_1,\cdots,h_m)$ is in $H^\rho(\mathbb{R}^n)$ with
$$\|\textstyle\prod_{l}(g,h_1,\cdots,h_m)\|_{H^\rho(\mathbb{R}^n)}\lesssim \|g\|_{L^{p'}(\mu_{\vec{\omega}}^{1-p'})}\|h_1\|_{L^{p_1}(\omega_1^{p_{1}})}\cdots\|h_m\|_{L^{p_m}(\omega_m^{p_{m}})}.$$
Thus, we complete the proof of Lemma \ref{lem-main22}.
\end{proof}

\begin{lemma} \label{lem-main3}
Let $1\le l\le m$, $1<p, p_{1},\dots,p_{m}<\infty$, $\frac{1}{p_1}+\dots+\frac{1}{p_m} = \frac{1}{p}$ and $\vec{\omega}\in A_{\vec{P}}$. For every $H^{1}(\mathbb{R}^n)$-atom $a(x)$,
 there exists $g\in L^{p^{\prime}}(\nu^{1-p^{\prime}}_{\vec{\omega}})$ and $h_{i}\in L^{p_{i}}(\omega_i), i=1,2,\dots ,m$ and a large positive number $N$ (depending only on $\varepsilon$) such that:
 $$   \| a- {\textstyle \prod_{l}}(g,h_1,h_2,\dots ,h_m)  \|_{H^1(\mathbb{R}^n)}<\varepsilon $$ and that
 \begin{align*}
 	\|g\|_{L^{p^{\prime}}(\nu^{1-p^{\prime}}_{\vec{\omega}})} \|h_{1} \|_{L^{p_{1}}(\omega_1)}\cdots \|h_{m} \|_{L^{p_{m}}(\omega_m)}\lesssim N^{mn(1+L)}.
 \end{align*}
 \end{lemma}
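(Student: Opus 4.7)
The approach is the constructive strategy of Uchiyama and Li--Wick, adapted to the weighted multilinear setting. Let $a$ be an $H^{1}$-atom with $\supp a\subset B_{0}=B(x_{0},r)$, $\|a\|_{\infty}\le|B_{0}|^{-1}$ and $\int a=0$. For $N$ large (to be chosen in terms of $\varepsilon$), pick $y_{0}$ with $|x_{0}-y_{0}|=Nr$ inside the non-degeneracy region where the $mn$-homogeneity of $K$ applies, set $\widetilde{B}_{0}=B(y_{0},r)$, and let $B^{*}$ be a ball of radius $\sim Nr$ containing $B_{0}\cup\widetilde{B}_{0}$. I would define
\begin{equation*}
h_{i}(x):=\omega_{i}^{1-p'_{i}}(x)\chi_{\widetilde{B}_{0}}(x),\qquad g(x):=-\alpha^{-1}a(x),\qquad \alpha:=T(h_{1},\ldots,h_{m})(x_{0}).
\end{equation*}
This choice is motivated by two facts: $\|h_{i}\|_{L^{p_{i}}(\omega_{i})}^{p_{i}}=\omega_{i}^{1-p'_{i}}(\widetilde{B}_{0})$, placing every weighted quantity into the class $\omega_{i}^{1-p'_{i}}\in A_{mp'_{i}}\subset A_{\infty}$, and $\int g=0$ automatically. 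The size and $mn$-homogeneity conditions give $|\alpha|\sim\prod_{i}\omega_{i}^{1-p'_{i}}(\widetilde{B}_{0})/(Nr)^{mn}$, and the H\"older regularity of $K$ gives $|T(h_{1},\ldots,h_{m})(x)-\alpha|\lesssim N^{-\gamma}|\alpha|$ for $x\in B_{0}$.

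For the error $f:=a-{\textstyle\prod_{l}}(g,h_{1},\ldots,h_{m})$: on $B_{0}$ only the term $-gT(h_{1},\ldots,h_{m})$ in $\prod_{l}$ survives, so $f=g\cdot(T(h_{1},\ldots,h_{m})-\alpha)$ and $|f|\lesssim N^{-\gamma}|B_{0}|^{-1}$ pointwise. On $\widetilde{B}_{0}$ only $h_{l}T^{*}_{l}(h_{1},\ldots,g,\ldots,h_{m})$ survives, and using $\int g=0$ together with the H\"older regularity of $K$ in its $l$-th slot produces $|h_{l}T^{*}_{l}(\cdots)(x)|\lesssim N^{-\gamma}\omega_{l}^{1-p'_{l}}(x)/\omega_{l}^{1-p'_{l}}(\widetilde{B}_{0})$. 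Taking an $L^{q}$-norm with $q>1$ chosen inside the reverse-H\"older range of $\omega_{l}^{1-p'_{l}}$ yields $\|h_{l}T^{*}_{l}(\cdots)\|_{L^{q}(\widetilde{B}_{0})}\lesssim N^{-\gamma}|\widetilde{B}_{0}|^{1/q-1}$. Combined with $\int f=0$, Lemma~\ref{lem-main11} gives $\|f\|_{H^{1}}\lesssim N^{-\gamma}\log N$, which is less than $\varepsilon$ for $N$ large.

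For the norm product, direct substitution yields
\begin{equation*}
\|g\|_{L^{p'}(\nu^{1-p'}_{\vec{\omega}})}\prod_{i}\|h_{i}\|_{L^{p_{i}}(\omega_{i})}\lesssim \frac{\nu^{1-p'}_{\vec{\omega}}(B_{0})^{1/p'}(Nr)^{mn}}{|B_{0}|\prod_{i}\omega_{i}^{1-p'_{i}}(\widetilde{B}_{0})^{1/p'_{i}}}.
\end{equation*}
To bound this by $N^{mn(1+L)}$, I would apply the reverse $A_{\infty}$ inequality \eqref{weight-eq1} to each $\omega_{i}^{1-p'_{i}}\in A_{\infty}$ on $B^{*}$, giving $\omega_{i}^{1-p'_{i}}(\widetilde{B}_{0})\gtrsim N^{-nL}\omega_{i}^{1-p'_{i}}(B^{*})$ and thus a factor $N^{nL(m-1/p)}$ in the denominator (since $\sum 1/p'_{i}=m-1/p$). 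The remaining ratio $\nu^{1-p'}_{\vec{\omega}}(B_{0})^{1/p'}/\prod_{i}\omega_{i}^{1-p'_{i}}(B^{*})^{1/p'_{i}}$ is then controlled by combining the trivial monotonicity $\nu^{1-p'}_{\vec{\omega}}(B_{0})\le\nu^{1-p'}_{\vec{\omega}}(B^{*})$, the H\"older inequality $|B^{*}|\le\nu_{\vec{\omega}}(B^{*})^{1/p}\nu^{1-p'}_{\vec{\omega}}(B^{*})^{1/p'}$, and the $A_{\vec{P}}$-condition on $B^{*}$. Together with the factor $(Nr)^{mn}/|B_{0}|\sim N^{mn}$, this yields the bound $\lesssim N^{mn(1+L)}$.

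The principal obstacle is this last norm estimate. The individual weights $\omega_{i}$ need not lie in $A_{\infty}$ when $m>1$ (only $\omega_{i}^{1-p'_{i}}\in A_{mp'_{i}}$ and $\nu_{\vec{\omega}}\in A_{mp}$), so one cannot naively compare $\omega_{i}$-measures across distant balls, and one cannot directly apply \eqref{weight-eq1} to $\nu^{1-p'}_{\vec{\omega}}$. The deliberate choice $h_{i}=\omega_{i}^{1-p'_{i}}\chi_{\widetilde{B}_{0}}$ is precisely what shifts every weighted comparison into the good class, though at the cost of replacing pointwise $L^{\infty}$ bounds in the adjoint-side error by reverse-H\"older estimates and requiring the $A_{\vec{P}}$-condition to be used in dual form in order to control the $\nu^{1-p'}_{\vec{\omega}}$-integral on $B_{0}$.
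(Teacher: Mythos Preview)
Your error estimate is fine and in the same spirit as the paper's: placing the $h_i$'s as multiples of $\omega_i^{1-p'_i}$ and exploiting the cancellation of $a$ together with the kernel regularity gives $\|a-\textstyle\prod_l(g,h_1,\ldots,h_m)\|_{H^1}\lesssim N^{-\gamma}\log N$ via Lemma~\ref{lem-main11}, exactly as you sketch.

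The genuine gap is in the norm bound, and it lies one step earlier than where you locate the obstacle. With your choice $g=-\alpha^{-1}a$ you have
\[
\|g\|_{L^{p'}(\nu_{\vec\omega}^{\,1-p'})}^{p'}=|\alpha|^{-p'}\int_{B_0}|a|^{p'}\nu_{\vec\omega}^{\,1-p'}\ ,
\]
so the very quantity $\nu_{\vec\omega}^{\,1-p'}(B_0)$ enters the game. But for $m>1$ the $A_{\vec P}$ condition only gives $\nu_{\vec\omega}\in A_{mp}$, i.e.\ $\nu_{\vec\omega}^{\,1-(mp)'}\in L^1_{\rm loc}$; there is no reason for the more singular power $\nu_{\vec\omega}^{\,1-p'}=\nu_{\vec\omega}^{-1/(p-1)}$ to be locally integrable. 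Concretely, take $m=2$, $n=1$, $p_1=p_2=4$ (so $p=2$) and $\omega_1(x)=\omega_2(x)=|x|^{2}$: then $\omega_i^{1-p_i'}=|x|^{-2/3}\in A_{8/3}$ and $\nu_{\vec\omega}=|x|^{2}\in A_4$, so $\vec\omega\in A_{\vec P}$, yet $\nu_{\vec\omega}^{\,1-p'}=|x|^{-2}\notin L^1_{\rm loc}$. For any atom whose support contains the origin your $g$ is not in $L^{p'}(\nu_{\vec\omega}^{\,1-p'})$ at all, and no ``dual form'' of the $A_{\vec P}$ condition can rescue this, since that condition controls $\nu_{\vec\omega}(B)^{1/p}\prod_i\omega_i^{1-p'_i}(B)^{1/p'_i}$ from above, not $\nu_{\vec\omega}^{\,1-p'}(B)$.

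The paper circumvents this by swapping the roles of $g$ and $h_l$. The atom is carried by $h_l$, multiplied by $\nu_l=\omega_l^{1-p'_l}$ so that $|h_l|^{p_l}\omega_l$ becomes $|a|^{p_l}\nu_l$ with $\nu_l\in A_{mp'_l}\subset L^1_{\rm loc}$. The function $g$ is placed on the far ball and chosen as $g=\bigl(|B_l|\,\nu_{\vec\omega}/\nu_{\vec\omega}(B_l)\bigr)^{1/p}\chi_{B_l}$, so that $|g|^{p'}\nu_{\vec\omega}^{\,1-p'}$ is \emph{constant} on $B_l$ and the potentially non-integrable weight $\nu_{\vec\omega}^{\,1-p'}$ never appears as a measure. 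Every weighted quantity that remains lives in one of the good $A_\infty$ classes $\nu_{\vec\omega}\in A_{mp}$ or $\nu_i\in A_{mp'_i}$, which is exactly what makes the $N^{mn(1+L)}$ bound go through. You correctly identified that the comparisons must be routed through these classes; the point you missed is that your choice of $g$ re-introduces $\nu_{\vec\omega}^{\,1-p'}$ through the back door.
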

 \begin{proof}
 Let $a(x)$ be an $H^{1}(\mathbb{R}^n)$-atom, supported in $B(x_0,r)$, satisfying that
 $$ \int_{\mathbb{R}^n}a(x)dx=0\qquad and \qquad   \|a\|_{L^{\infty}(\mathbb{R}^n)}\le |B(x_{0},r)|^{-1}.$$
 Fix $1\le l \le m$. Now select $y_{l}\in \mathbb{R}^n$ so that $y_{l,i}-x_{0,i}=\frac{Nr}{\sqrt{n}}$, where $x_{0,i}$(reps.$y_{l,i}$) is the $i$-th coordinate of $x_{0}$(reps.$y_{l}$) for
 $i=1,2,\dots ,n$. Note that for this $y_{l}$, we have $  | x_0-y_l   | =Nr$. Similar to the relation of $x_0$ and $y_l$, we choose $y_1$ such that $y_0$ and $y_1$
 satisfies the same relationship as $x_0$ and $y_l$ do.  Then by induction we choose $y_2,\dots ,y_{l-1},y_{l+1},\dots ,y_m$.
We write $B_i=B(y_i,r)$, $\nu_i=\omega_i^{1-p_i^{\prime}}$ and set
 \begin{equation*}
\begin{split}  	
&g(x):=\big(\frac{|B_{l}|\nu_{\vec{\omega}}(x)}{\nu_{\vec{\omega}}(B_{l})}\big)^{\frac{1}{p}}\chi_{B_l}(x),\\ &g(B_{l}):=\int_{B(y_l,r)}g(z_{l})dz_{l},\\
&h_j(x):=\nu_{j}(x)\chi_{B_j}(x),\qquad j\ne l,\\
&h_l(x)=\frac{a(x)}{T_l(h_1,\dots,h_{l-1},g,h_{l+1},\dots,h_m)(x_0)}\frac{g(B_{l})\nu_{l}(x)}{\nu_{l}(B_{l})}\chi_{B_l}(x).\\
 	\end{split}
    \end{equation*}

From the definitions of the functions $g(x)$, we obtain that $supp\ g=B(y_l,r)$ and
   \begin{equation}\label{lem3-2}
    \begin{aligned}
    \|g\|_{L^{p^{\prime}}(\nu_{\vec{\omega}}^{1-p^{\prime}})}=\Big(\frac{|B_l|}{\nu_{\vec{\omega}}(B_l)}\Big)^{1/p}|B_l|^{1/p^{\prime}}=\frac{|B_l|}{\nu_{\vec{\omega}}(B_l)^{1/p}}
    \end{aligned}
    \end{equation}
For the specific choice of the functions
    $h_1,\dots ,h_{l-1},g,h_{l+1},\dots,h_m$ as above, there exists a positive constant $C$ such that
   \begin{equation}\label{lem3-1}
    \begin{aligned}
    	&|T_{l}(h_1,\dots,h_{l-1},g,h_{l+1},\dots,h_m)(x_0)|\\
    	&\geq C\int_{B_1\times\cdots\times B_m}\frac{g(z_{l})\prod_{j\ne l}\nu_{j}(z_j)}{(|x_0-z_1|+\cdots+|x_0-z_m|)^{mn}}dz_1\cdots dz_m\\
    	&\ge C(Nr)^{-mn}g(B_{l})\prod_{j\ne l}\nu_j(B_j).
    \end{aligned}
    \end{equation}
    Which shows that
   \begin{equation}\label{lem3-3}
    \begin{aligned}
    \|h_j\|_{L^{p_j}(\omega_j)}\lesssim \nu_j(B_j)^{\frac{1}{p_j}}
    \end{aligned}
    \end{equation}
    for $i=1,\dots,l-1,l+1,\dots,m$. Also, the inequality \eqref{lem3-1} and $\nu_{l}^{p_{l}}\omega_{l}=\nu_{l}$ give us that
   \begin{equation}\label{lem3-4}
    \begin{aligned}
    	 \|h_{l} \|_{L^{p_{l}}(\omega_l)}
    	& = \frac{1}{ | T_{l} (h_{1}, \ldots, h_{l-1}, g, h_{l+1}, \ldots, h_{m} ) (x_{0} ) |}\frac{g(B_{l})}{\nu_{l}(B_{l})}\|a\|_{L^{p_{l}}(\nu_{l})} \\
    	&\lesssim r^{-n}(Nr)^{mn}\nu_{l}(B_{l})^{-1/p'_{l}}\prod_{j\neq l}\nu_j(B_j)^{-1}.
    \end{aligned}
    \end{equation}
Combining the estimates \eqref{lem3-2}, \eqref{lem3-3} and \eqref{lem3-4}, we arrive at
   \begin{equation}\label{lem3-5}
    \begin{aligned}
    	\|g\|_{L^{p^{\prime}}(\nu^{1-p^{\prime}}_{\vec{\omega}})} \|h_{1} \|_{L^{p_{1}}(\omega_1)}\cdots \|h_{m} \|_{L^{p_{m}}(\omega_m)}
    	&\lesssim (Nr)^{mn}\nu_{\vec{\omega}}(B_l)^{-1/p}\prod_{j=1}^{m}\nu_j(B_j)^{-1/p_j^{\prime}}.
    \end{aligned}
    \end{equation}
From the inequality \eqref{weight-eq1}, then
     $${\nu_j(B_l)}\lesssim {\nu_j((N+1)B_j)}\lesssim N^{nL}\nu_j(B_j),$$
which shows that
$$\nu_j(B_j)^{-1/p'_{j}}\lesssim N^{nL}\nu_j(B_l)^{-1/p'_{j}}.$$
Along with \eqref{lem3-5} and the fact that $|B_l|^{m}\leq \nu_{\vec{\omega}}(B_l)^{1/p}\prod_{j=1}^{m}\nu_j(B_l)^{1/p_j^{\prime}}$, we have
   \begin{equation}\label{wzs}
    \begin{aligned}
    \|g\|_{L^{p^{\prime}}(\nu^{1-p^{\prime}}_{\vec{\omega}})} \|h_{1} \|_{L^{p_{1}}(\omega_1)}\cdots \|h_{m} \|_{L^p_{m}(\omega_m)}\lesssim N^{mn(1+L)}.
        \end{aligned}
    \end{equation}
    Next, we have
	\begin{equation*}
	\begin{split}  
		&a(x)- {\textstyle \prod_{l}}(g,h_1,h_2,\dots ,h_m)(x)\\
		&=a(x)-  [h_l T_l^*(h_1,\dots,h_{l-1},g,h_{l+1},\dots,h_m)-gT(h_1,\dots,h_m)(x)  ] \\
		&=a(x)\frac{T_l^*(h_1,\dots,h_{l-1},g,h_{l+1},\dots,h_m)(x_0)-T_l^*(h_1,\dots,h_{l-1},g,h_{l+1},\dots,h_m)(x)}{T_l^*(h_1,\dots,h_{l-1},g,h_{l+1},\dots,h_m)(x_0)} \\
		&\qquad+g(x){T(h_1,\dots,h_m)(x})\\
		&=:{\rm I_1}(x)+{\rm I_2}(x).
	\end{split}
    \end{equation*}
Therefore, ${\rm I_1}(x)$ is supported on $B(x_0,r)$ and ${\rm I_2}(x)$ is supported on $B(y_0,r)$.

 We first estimate ${\rm I_1}(x)$. For $x\in B(x_0,r)$,
	we have
   \begin{align*}
   	  |{\rm I_1}|
   	&\lesssim\frac{\|a\|_{L^{\infty}}}{(Nr)^{-mn}g(B_{l})\prod_{j\ne l}\nu_j(B_j)}\\
   &\qquad \times\int_{\prod_{j=1}^{m}B(y_j,r)}\frac{|x-x_0|^{\gamma}g(z_l)\prod_{j\ne l}h_j(z_j)}{(\sum_{i=1}^{m}|z_l-z_i|+|z_l-x_0|)^{mn+\gamma}}dz_1\cdots dz_m\\
   	&\lesssim\frac{r^{-n}}{(Nr)^{-mn}g(B_{l})\prod_{j\ne l}\nu_j(B_j)}
   \frac{r^{\gamma}g(B_{l})\prod_{j\ne l}\nu_j(B_j)}{(Nr)^{mn+\gamma}}\\
   	&\lesssim \frac{1}{N^{\gamma} r^n}.
   \end{align*}
   Hence we obtain that
   \begin{equation}\label{I1}
    \begin{aligned}
    |{\rm I_1}(x)\lesssim \frac{1}{N^{\gamma} r^n}\chi_{B(x_0,r)}(x).
            \end{aligned}
    \end{equation}

For the term ${\rm I_2}(x)$, it follows from the definitions of $g(x)$, $h_i(x)$ that
  	\begin{align*}
		&|T(h_1,\cdots,h_m)(x)|\\
		&=\frac{1}{ |T^*_l(h_1,\dots,h_{l-1},g,h_{l+1},\dots,h_m)(x_0) |} \\
		&\qquad \times\Big|\int_{\prod_{j\ne l}B(y_j,r)\times B(x_0,r)} \big(K(z_1,\dots,z_{l-1},x_0,z_{l+1},\dots,z_m)(x_0)\\
		&\qquad\qquad-K(z_1,\dots,z_{l-1},x,z_{l+1},\dots,z_m)(x_0)\big)a(z_l)\prod_{j\neq l}h_{j}(z_{j})dz_1\cdots dz_m\Big|\\
		&\lesssim\frac{\|a\|_{L^{\infty}}}{(Nr)^{-mn}g(B_{l})\prod_{j\ne l}\nu_j(B_j)}\\
&\qquad \times \frac{g(B_{l})}{\nu_{l}(B_{l})}
	\int_{\prod_{j\ne l}B(y_j,r)\times B(x_0,r)}\frac{|x-x_0|^{\gamma}\nu_{l}(z_{l})\prod_{j\ne l}|h_j(z_j)|}{(\sum_{i=1}^{m}|z_l-z_i|+|z_l-x_0|)^{mn+\gamma}}dz_1\cdots dz_m\\
		&\lesssim \frac{1}{N^{\gamma} r^n},
    \end{align*}
where in the second equality we use the cancellation property of the atom $a(z_l)$. Thus
   \begin{equation}\label{I2}
    \begin{aligned}
    |{\rm I_2}(x)|\lesssim\frac{g(x)}{N^{\gamma} r^n}\chi_{B(y_l,r)}(x).
        \end{aligned}
    \end{equation}
	Combining the estimates \eqref{I1} and \eqref{I2}, we obtain that
\begin{equation}\label{size}
\begin{aligned}
	  | a(x)- {\textstyle \prod_{l}(g,h_1,\dots ,h_m)(x)} |\lesssim \frac{1}{N^{\gamma} r^n}\chi_{B(x_{0},r)}(x)+\frac{g(x)}{N^{\gamma} r^n}\chi_{B(x_{l},r)}(x)
	\end{aligned}
\end{equation}
with $\|g\|_{L^{p}(\mathbb{R}^n)}\leq |B_{l}|^{1/p}$. In addition, we point out that
\begin{equation}\label{can}
\begin{aligned}
	\int_{\mathbb{R}^n}  [a(x)- {\textstyle \prod_{l}(g,h_1,\dots ,h_m)(x)}  ]dx=0,
	\end{aligned}
\end{equation}
since the atom $a(x)$ has cancellation property and the second integral equals $0$ just by the definitions of ${\textstyle \prod_{l}}$.
    Then the size estimate \eqref{size} and the cancellation \eqref{can}, together with Lemma \ref{lem-main11}, imply that
    $$   \| a(x)- {\textstyle \prod_{l}(g,h_1,\dots ,h_m)(x)}   \|_{H^1(\mathbb{R}^n)}\le C\frac{\log N}{N^{\gamma}}.$$
     For $N$ sufficiently large such that
     $$\frac{C\log N}{N^{\gamma}}<\epsilon,$$
the result follows from here.
\end{proof}

\begin{lemma} \label{lem-main4}
Let $1\le l\le m$, $\frac{n}{n+\gamma}<\rho<1$, $1< p_{1},\dots,p_{m},q<\infty$ and $\omega\in A_{\vec{P},q}$. Then for any $\epsilon>0$, there exist $N>0$ and $C>0$ such that for any $H^\rho(\mathbb{R}^n)$-atom $a(x)$, there exist $g\in L^{q}(\mu_{\vec{\omega}}^{1-q'})$, $h_i\in L^{p_i}(\omega_i^{p_i})$, $i=1,2,\cdots,m$, with $\sum_{i=1}^{m}\frac{1}{p_i}-\frac{1}{q}=\frac{1}{\rho}-1$, such that
$$\|a- {\textstyle\prod_{l}}(g,h_1,\cdots,h_m)\|_{H^\rho(\mathbb{R}^n)}<\varepsilon,$$
and $$\|g\|_{L^q(\mu_{\vec{\omega}}^{1-q'})} \|h_1\|_{L^{p_1}(\omega_1^{p_1})}\cdots\|h_m\|_{L^{p_m}(\omega_m^{p_m})}\le CN^{mn(1+L)}.$$
\end{lemma}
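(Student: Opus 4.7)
The approach is to mirror the constructive proof of Lemma \ref{lem-main3}, replacing the $A_{\vec P}$ framework by its $A_{\vec P,q}$ counterpart and invoking Lemma \ref{lem-main1} (instead of Lemma \ref{lem-main11}) at the end. Because an $H^{\rho}$-atom satisfies $\|a\|_{L^{\infty}}\le|B(x_{0},r)|^{-1/\rho}$, the pointwise size estimates produced by the construction will scale like $r^{-n/\rho}$ rather than $r^{-n}$, which is precisely the scaling that makes them admissible envelopes in Lemma \ref{lem-main1} with the exponent $n(1/\rho-1)$.

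Given an atom $a$ supported in $B(x_{0},r)$, I would pick $y_{1},\dots,y_{m}$ with $|x_{0}-y_{j}|=Nr$ (organised exactly as in Lemma \ref{lem-main3}), let $B_{j}=B(y_{j},r)$, and define
\begin{align*}
g(x) & :=\Big(\frac{|B_{l}|\,\mu_{\vec{\omega}}(x)}{\mu_{\vec{\omega}}(B_{l})}\Big)^{1/q}\chi_{B_{l}}(x),\qquad
h_{j}(x):=\omega_{j}(x)^{-p_{j}'}\chi_{B_{j}}(x)\ (j\neq l),\\
h_{l}(x) & :=\frac{a(x)\,g(B_{l})\,\omega_{l}(x)^{-p_{l}'}}{T_{l}^{*}(h_{1},\ldots,h_{l-1},g,h_{l+1},\ldots,h_{m})(x_{0})\,\omega_{l}^{-p_{l}'}(B_{l})}\chi_{B_{l}}(x).
\end{align*}
Using the identities $p_{j}(1-p_{j}')=-p_{j}'$ and $q'/q+(1-q')=0$, a direct computation yields $\|g\|_{L^{q'}(\mu_{\vec{\omega}}^{1-q'})}=|B_{l}|/\mu_{\vec{\omega}}(B_{l})^{1/q}$ and $\|h_{j}\|_{L^{p_{j}}(\omega_{j}^{p_{j}})}=\omega_{j}^{-p_{j}'}(B_{j})^{1/p_{j}}$ for $j\neq l$; the $mn$-homogeneity of $T$ then gives
\begin{align*}
|T_{l}^{*}(h_{1},\ldots,h_{l-1},g,h_{l+1},\ldots,h_{m})(x_{0})|\gtrsim(Nr)^{-mn}\,g(B_{l})\prod_{j\neq l}\omega_{j}^{-p_{j}'}(B_{j}),
\end{align*}
which together with $\|a\|_{L^{\infty}}\le|B(x_{0},r)|^{-1/\rho}$ controls $\|h_{l}\|_{L^{p_{l}}(\omega_{l}^{p_{l}})}$ from above.

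The main technical obstacle is assembling these four bounds into the single estimate $\|g\|\prod_{j}\|h_{j}\|\lesssim N^{mn(1+L)}$. Since $\omega_{j}^{-p_{j}'}\in A_{mp_{j}'}\subset A_{\infty}$, the doubling property \eqref{weight-eq1} lets me replace $\omega_{j}^{-p_{j}'}(B_{j})$ by $\omega_{j}^{-p_{j}'}(B_{l})$ up to a factor $N^{nL}$ each; this is where the $N^{L}$ in the final bound comes from. The powers of $|B_{l}|$, $\mu_{\vec{\omega}}(B_{l})$ and $\omega_{j}^{-p_{j}'}(B_{l})$ then have to cancel exactly, which I would verify via the $A_{\vec P,q}$ reverse inequality $|B_{l}|\lesssim\mu_{\vec{\omega}}(B_{l})^{1/(mq)}\prod_{j}\omega_{j}^{-p_{j}'}(B_{l})^{1/(mp_{j}')}$; this inequality comes from the pointwise identity $\mu_{\vec{\omega}}^{1/q}\prod_{j}(\omega_{j}^{-p_{j}'})^{1/p_{j}'}\equiv 1$ via generalised H\"older, and is the natural analogue of $|B_{l}|^{m}\le\nu_{\vec{\omega}}(B_{l})^{1/p}\prod_{j}\nu_{j}(B_{l})^{1/p_{j}'}$ used in the proof of Lemma \ref{lem-main3}. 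Balancing the exponents $1/p_{j}$, $1/q$ and $1/\rho$ correctly in this step is the delicate part of the argument.

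Once the norm bound is in hand, I would split $a-\textstyle\prod_{l}(g,h_{1},\ldots,h_{m})=:\mathrm{I}_{1}+\mathrm{I}_{2}$ exactly as in Lemma \ref{lem-main3}: $\mathrm{I}_{1}$ is supported in $B(x_{0},r)$ and arises from replacing $T_{l}^{*}(\ldots)(x)$ by $T_{l}^{*}(\ldots)(x_{0})$, while $\mathrm{I}_{2}$ is supported in $B_{l}$ and exploits the vanishing mean of $a$. Kernel regularity and the $mn$-homogeneous lower bound produce
\begin{align*}
|\mathrm{I}_{1}(x)|\lesssim\frac{1}{N^{\gamma}\,r^{n/\rho}}\chi_{B(x_{0},r)}(x),\qquad
|\mathrm{I}_{2}(x)|\lesssim\frac{g(x)}{N^{\gamma}\,r^{n/\rho}}\chi_{B_{l}}(x),
\end{align*}
and both envelopes have $L^{q}$-norm bounded by $CN^{-\gamma}|B_{l}|^{1/q-1/\rho}$. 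Since $|x_{0}-y_{l}|=Nr$ and the difference has zero mean, Lemma \ref{lem-main1} applies and gives $\|a-\textstyle\prod_{l}(g,h_{1},\ldots,h_{m})\|_{H^{\rho}(\mathbb{R}^{n})}\lesssim N^{n(1/\rho-1)-\gamma}$. The hypothesis $\rho>n/(n+\gamma)$ makes this exponent strictly negative, so taking $N$ sufficiently large forces the norm below $\varepsilon$, completing the proof.
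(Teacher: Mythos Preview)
Your proposal is correct and follows essentially the same route as the paper's proof: the choice of $g$, $h_{j}$ (the paper writes $\mu_{j}=\omega_{j}^{-p_{j}'}$), the lower bound on $T_{l}^{*}$ from $mn$-homogeneity, the doubling step via \eqref{weight-eq1}, the decomposition into $\mathrm{I}_{1}+\mathrm{I}_{2}$, and the final appeal to Lemma~\ref{lem-main1} all match. The one imprecision is the form of your ``reverse inequality'': the exponents $1/(mq)$ and $1/(mp_{j}')$ are not quite right (they would only be correct when $1/p=1/q$); the paper instead uses the un-normalised form $|B_{l}|^{1/q+\sum_{j}1/p_{j}'}\le \mu_{\vec\omega}(B_{l})^{1/q}\prod_{j}\mu_{j}(B_{l})^{1/p_{j}'}$, which combined with $\sum_{j}1/p_{j}-1/q=1/\rho-1$ makes the powers of $r$ cancel exactly. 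Since you already flag this balancing as the delicate step, this is a bookkeeping issue rather than a gap.
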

\begin{proof}
	Let $a(x)$ be an $H^{\rho}(\mathbb{R}^n)$-atom, supported in $B(x_0,r)$, satisfying that
	$$ \int_{\mathbb{R}^n}a(x)dx=0\qquad and \qquad   \|a\|_{L^{\infty}(\mathbb{R}^n)}\le |B(x_{0},r)|^{-1/\rho}.$$

Similar to the lemma \ref{lem-main3}, we can choose $y_1,\dots,y_m$ such that $|y_{i}-y_{i+1}|=Nr$ with $i=1,\cdots,m-1$.
	We write $B_i=B(y_i,r)$, $\mu_i=\omega_i^{-p_i^{\prime}}\in A_{mp'_{i}}\subset A_{\infty}$ and set
	\begin{equation*}
		\begin{split}
			&g(x):=\Big(\frac{|B_{l}|\mu_{\vec{\omega}}(x)}{\mu_{\vec{\omega}}(B_{l})}\Big)^{1/q}\chi_{B_l}(x),\\
			&g(B_{l}):=\int_{B(y_l,r)}g(z_{l})dz_{l},\\
			&h_j(x):=\mu_{j}(x)\chi_{B_j}(x),\qquad j\ne l,\\
			& h_l(x)=\frac{a(x)}{T_l(h_1,\dots,h_{l-1},g,h_{l+1},\dots,h_m)(x_0)}\frac{g(B_{l})\mu_{l}(x)}{\mu_{l}(B_{l})}\chi_{B_l}(x).
		\end{split}
	\end{equation*}
	
	For the specific choice of the functions
	$h_1,\dots ,h_{l-1},g,h_{l+1},\dots,h_m$ as above, there exists a positive constant $C$ such that
	\begin{equation}\label{lem4-1}
    \begin{aligned}
    	&|T_{l}(h_1,\dots,h_{l-1},g,h_{l+1},\dots,h_m)(x_0)|\\
    	&\geq C\int_{B_1\times\cdots\times B_m}\frac{g(z_{l})\prod_{j\ne l}\mu_{j}(z_j)}{(|x_0-z_1|+\cdots+|x_0-z_m|)^{mn}}dz_1\cdots dz_m\\
    	&\ge C(Nr)^{-mn}g(B_{l})\prod_{j\ne l}\mu_j(B_j).
    \end{aligned}
    \end{equation}
	From the definitions of the functions $g(x)$, we obtain that $supp\ g=B(y_l,r)$. Moreover,
\begin{equation}\label{lem4-2}
    \begin{aligned}	\|g\|_{L^{q^{}\prime}(\mu_{\vec{\omega}}^{1-q^{\prime}})}=\Big(\frac{|B_l|}{\mu_{\vec{\omega}}(B_l)}\Big)^{1/q}|B_l|^{1/q^{\prime}}=\frac{|B_l|}{\mu_{\vec{\omega}}(B_l)^{1/q}}.
    \end{aligned}
    \end{equation}
By \eqref{lem4-1}, we have
	\begin{equation}\label{lem4-3}
    \begin{aligned}
		\|h_j\|_{L^{p_j}(\omega_j^{p_j})}\lesssim \mu_j(B_j)^{\frac{1}{p_j}}
    \end{aligned}
    \end{equation}
	for $j\neq l$. Also, for $j=l$,
	\begin{equation}\label{lem4-4}
    \begin{aligned}
		 \|h_{l} \|_{L^{p_{l}}(\omega_l^{p_l})}
		& = \frac{1}{ |T_{l} (h_{1}, \ldots, h_{l-1}, g, h_{l+1}, \ldots, h_{m} ) (x_{0} ) |}\frac{g(B_{l})}{\mu_{l}(B_{l})}\|a\|_{L^{p_{l}}(\mu_{l})} \\
		&\lesssim r^{-n/s}(Nr)^{mn}\mu_{l}(B_{l})^{-1/p'_{l}}\prod_{j\neq l}\mu_j(B_j)^{-1}.
    \end{aligned}
    \end{equation}
Combining the estimates \eqref{lem4-2}, \eqref{lem4-3} and \eqref{lem4-4}, we arrive at
	\begin{align*}
		\|g\|_{L^{q^{\prime}}(\mu^{1-q^{\prime}}_{\vec{\omega}})} \|h_{1} \|_{L^{p_{1}}(\omega_1^{p_1})}\cdots \|h_{m} \|_{L^{p_{m}(\omega_m^{p_m})}}
		&\lesssim r^{n/s'}(Nr)^{mn}\mu_{\vec{\omega}}(B_l)^{-1/q}\prod_{j=1}^{m}\mu_j(B_j)^{-1/p_j^{\prime}}.
	\end{align*}
	From the fact that $\sum_{j=1}^{m}\frac{1}{p_{j}}-\frac{1}{q}=\frac{1}{\rho}-1$ and
	$$1\leq   \Big(\frac{1}{|B|}\int_{B}\prod_{i=1}^{m}\omega_i(x)^{q}dx \Big)^{\frac{1}{q}} \prod_{i=1}^{m}  \Big(\frac{1}{|B|}\int_{B}\omega_{i}(x)^{-p_i^\prime} dx \Big)^{\frac{1}{p_i^{\prime}}}.$$
The same argument as \eqref{wzs}, we obtain
	$$\|g\|_{L^{q^{\prime}}(\mu^{1-q^{\prime}}_{\vec{\omega}})} \|h_{1} \|_{L^{p_{1}}(\omega_1^{p_1})}\cdots \|h_{m} \|_{L^{p_{m}(\omega_m^{p_m})}}\lesssim N^{mn(1+L)}.$$
	Next, we have
	\begin{equation*}
		\begin{split}
			&a(x)- {\textstyle \prod_{l}}(g,h_1,h_2,\dots ,h_m)(x)\\
			&=a(x)-  [h_lT^*_l(h_1,\dots,h_{l-1},g,h_{l+1},\dots,h_m)-gT_l(h_1,\dots,h_m)(x)  ] \\
			&=a(x)\frac{T_l^*(h_1,\dots,h_{l-1},g,h_{l+1},\dots,h_m)(x_0)-T_l^*(h_1,\dots,h_{l-1},g,h_{l+1},\dots,h_m)(x)}{T_l^*(h_1,\dots,h_{l-1},g,h_{l+1},\dots,h_m)(x_0)} \\
			&\quad+g(x){T(h_1,\dots,h_m)(x})\\
			&=:{\rm II_1}(x)+{\rm II_2}(x)\\
		\end{split}
	\end{equation*}
	It is obvious that ${\rm II_1}(x)$ is supported on $B(x_0,r)$ and ${\rm II_2}(x)$ is supported on $B(y_0,r)$.
	
	We first estimate ${\rm II_1}(x)$. For $x\in B(x_0,r)$,
	we have
   \begin{align*}
   	  | {\rm II_1}(x)   |
   	&\lesssim\frac{\|a\|_{L^{\infty}}}{(Nr)^{-mn}g(B_{l})\prod_{j\ne l}\nu_j(B_j)}\\
   &\qquad \times\int_{\prod_{j=1}^{m}B(y_j,r)}\frac{|x-x_0|^{\gamma}g(z_l)\prod_{j\ne l}h_j(z_j)}{(\sum_{i=1}^{m}|z_l-z_i|+|z_l-x_0|)^{mn+\gamma}}dz_1\cdots dz_m\\
   	&\lesssim\frac{r^{-n/\rho}}{(Nr)^{-mn}g(B_{l})\prod_{j\ne l}\mu_j(B_j)}
   \frac{r^{\gamma}g(B_{l})\prod_{j\ne l}\mu_j(B_j)}{(Nr)^{mn+\gamma}}\\
   	&\lesssim \frac{1}{N^{\gamma} r^{n/\rho}}.
   \end{align*}
	Hence we obtain that
	$$|{\rm II_1}(x)\lesssim \frac{1}{N^{\gamma} r^{n/\rho}}\chi_{B(x_0,r)}(x).$$
Meanwhile, we have
	$$|{\rm II_2}(x)|\lesssim\frac{g(x)}{N^{\gamma} r^{n/\rho}}\chi_{B(y_l,r)}(x).$$
with $\|g\|_{L^{q}(\mathbb{R}^n)}\leq |B_{l}|^{1/q}$. Combining the estimates of ${\rm II_1}(x)$ and ${\rm II_2}(x)$, we obtain that
	\begin{align}\label{II1}
		  | a(x)- {\textstyle \prod_{l}(g,h_1,\dots ,h_m)(x)} |\lesssim \frac{1}{N^{\gamma} r^{n/\rho}}\chi_{B(x_{0},r)}(x)+\frac{g(x)}{N^{\gamma} r^{n/\rho}}\chi_{B(x_{l},r)}(x)
	\end{align}
and
	\begin{align}\label{II2}
		\int_{\mathbb{R}^n}  [a(x)- {\textstyle \prod_{l}(g,h_1,\dots ,h_m)(x)} ]dx=0,
   \end{align}
	Then the size estimate \eqref{II1} and the cancellation \eqref{II2}, together with Lemma \ref{lem-main1}, imply that
	$$   \| a(x)- {\textstyle \prod_{l}(g,h_1,\dots ,h_m)(x)}   \|_{H^\rho(\mathbb{R}^n)}\lesssim \frac{N^{n(1/\rho-1)}}{N^{\gamma}}.$$
	Due to $\frac{n}{n+\gamma}<\rho<1$, for $N$ sufficiently large such that
	$$\frac{CN^{n(1/\rho-1)}}{N^{\gamma}}<\epsilon,$$
then, the lemma is proved.
\end{proof}

\section{Proofs of Main Theorems \ref{main1}-\ref{main2}}\label{s4}

The aim of this note is to prove the following results which give the factorization theorems in terms of multilinear Calder\'{o}n-Zygmund operators on weighted Lebesgue spaces without individual condition on the weight functions. With this approximation results above, Theorem \ref{main2} is completely analogous to Theorem \ref{main1}, with a small
difference, so we only prove the main Theorem \ref{main1}.

 \vspace{0.3cm}

 \noindent
 \textbf{Proof of Theorem \ref{main1}.}
 Utilizing the atomic decomposition, for any $f\in H^1(\mathbb{R}^n)$ we can find a sequence $ \{\lambda_s^1 \}\in \ell^1$ and sequence of $H^1(\mathbb{R}^n)$-atom
    $ \{a_{s}^{1}\}$ so that
    $$f=\sum_{s=1}^{\infty}\lambda _s^1a_s^1 \qquad  and \qquad \sum_{s=1}^{\infty}|\lambda_s^1|\le C  \|f\|_{H^1(\mathbb{R}^n)}.$$
    We explicitly track the implied absolute constant $C$ appearing from the atomic decomposition since it will play a role in the convergence of the algorithm.

Fix $\varepsilon >0$ small enough such that $C\varepsilon <1$. We apply Lemma \ref{lem-main3} to each atom $a_s^1$, there exists $g_s^1\in L^{p^{\prime}}(\nu^{1-p^{\prime}}_{\vec{\omega}}),h_{s,1}^1\in L^{p_1}(\omega_1),\cdots ,h_{s,m}^1\in L^{p_m}(\omega_m)$ with
    $$  \| a_s^1- {\textstyle \prod_{l}}(g_s^1,h_{s,1}^1,\cdot,h_{s,m}^1) \|_{H^1({\mathbb{R}^n)}} <\varepsilon ,\qquad\forall s$$ and
    $$  \| g_s^1   \|_{L^{p^\prime}(\nu^{1-p^{\prime}}_{\vec{\omega}})}   \| h_1   \|_{L^{p_1}(\omega_1)}\cdots   \| h_m   \|_{L^{p_m}(\omega_m)}\le CM^{mn(1+L)}.$$
Note that
   	\begin{equation*}
    \begin{split}
   	& f=\sum_{s=1}^{\infty}\lambda _s^1a_s^1=: M_1+E_1,
   	\end{split}
    \end{equation*}
    where
       	\begin{equation*}
    \begin{split}
   	& M_1=\sum_{s=1}^{\infty}\lambda_s^1{\textstyle\prod_{l}}(g_s^1,h_{s,1}^1,\dots,h_{s,m}^1)(x)\\
  	& E_1=\sum_{s=1}^{\infty}\lambda_s^1(a_s^1-{\textstyle\prod_{l}}(g_s^1,h_{s,1}^1,\dots,h_{s,m}^1)).
   	\end{split}
    \end{equation*}
    Observe that
       	\begin{equation*}
    \begin{split}
    \|E_1\|_{H^1(\mathbb{R}^n)}&\le{\textstyle \sum_{s=1}^{\infty}}|\lambda_s^1|\ \|a_s^1-{\textstyle\prod_{l}(g_s^1,h_{s,1}^1,\dots,h_{s,m}^1)}\|_{H^1(\mathbb{R}^n)}\\
    &\le \varepsilon {\textstyle \sum_{s=1}^{\infty}}|\lambda_s^1|\le \varepsilon C\|f\|_{H^1(\mathbb{R}^n)}.
       	\end{split}
    \end{equation*}

    Now,we iterate the construction on the function $E_1$.  Since $E_1\in H^1(\mathbb{R}^n)$, we can apply the atomic decomposition in $H^1(\mathbb{R}^n)$ to find a sequence $ \{\lambda_s^2 \}\in l^1$ and sequence of $H^1(\mathbb{R}^n)$-atom $  \{a_{s}^{2}   \}$ so that $E_1=\sum_{s=1}^{\infty}\lambda _s^2a_s^2$. and $$\sum_{s=1}^{\infty}|\lambda_s^2|\le C  \| E_1  \|_{H^1(\mathbb{R}^n)} \le \varepsilon C^2  \| f  \|_{H^1(\mathbb{R}^n)}.$$

    Again, applying Lemma \ref{lem-main3} to each atom $a_s^2$, there exists $g_s^2\in L^{p^{\prime}}(\nu^{1-p^{\prime}}_{\vec{\omega}}),h_{s,1}^2\in L^{p_1}(\omega_1),\cdots ,h_{s,m}^2\in L^{p_m}(\omega_m)$ with
    $$  \| a_s^2- {\textstyle \prod_{j,l}}(g_s^2,h_{s,1}^2,\cdot,h_{s,m}^2) \|_{H^1({\mathbb{R}^n)}} <\varepsilon ,\qquad\forall s$$
    We then have that:
   	\begin{equation*}
   	\begin{split}
    & E_1=\sum_{s=1}^{\infty}\lambda _s^2a_s^2={\textstyle \sum_{s=1}^{\infty}}\lambda_s^2{\textstyle\prod_{l}(g_s^2,h_{s,1}^2,\dots,h_{s,m}^2)(x)}+{\textstyle \sum_{s=1}^{\infty}}\lambda_s^2(a_s^2-{\textstyle\prod_{l}(g_s^2,h_{s,1}^2,\dots,h_{s,m}^2)})\\
   	&=: M_2+E_2.
   	\end{split}
    \end{equation*}
	As before, observe that
	\begin{equation*}
	\begin{split}
	\|E_2\|_{H^1(\mathbb{R}^n)} &\le{\textstyle \sum_{s=1}^{\infty}}|\lambda_s^2|\ \|a_s^2-{\textstyle\prod_{l}(g_s^2,h_{s,1}^2,\dots,h_{s,m}^2)}\|_{H^1(\mathbb{R}^n)}\\
    &\le \varepsilon{\textstyle\sum_{s=1}^{\infty}}|\lambda_s^2|
	\le (\varepsilon C)^2\|f\|_{H^1(\mathbb{R}^n)}.
	\end{split}
	\end{equation*}
    And, this implies for $f$ that we have:
   	\begin{equation*}
    \begin{split}
   	& f=\sum_{s=1}^{\infty}\lambda _s^1a_s^1={\textstyle \sum_{s=1}^{\infty}}\lambda_s^1{\textstyle\prod_{l}(g_s^1,h_{s,1}^1,\dots,h_{s,m}^1)(x)}
    		+{\textstyle \sum_{s=1}^{\infty}}\lambda_s^1(a_s^1-{\textstyle\prod_{l}(g_s^1,h_{s,1}^1,\dots,h_{s,m}^1)})\\
   	&= M_1+E_1= M_1+M_2+E_2\\
    & =\sum_{k=1}^{2}\sum_{s=1}^{\infty }\lambda_s^k{\textstyle\prod_{l}(g_s^k,h_{s,1}^k,\dots,h_{s,m}^k)}+E_2.
   	\end{split}
    \end{equation*}

Repeating this construction for each $1\le k\le K$ produces functions $g_s^k\in L^{p^{\prime}}(\nu^{1-p^{\prime}}_{\vec{\omega}}),h_{s,1}^k\in L^{p_1}(\omega_1),\cdots ,h_{s,m}^k\in L^{p_m}(\omega_m)$ with
 $$  \| g_s^k   \|_{L^{p^\prime}(\nu^{1-p^{\prime}}_{\vec{\omega}})}   \| h_{s,1}^k   \|_{L^{p_1}(\omega_1)}\cdots   \| h_{s,m}^k   \|_{L^{p_m}(\omega_m)}\le C(\varepsilon, L), \quad \forall s,$$
  sequences $ \{\lambda_s^k \}\in l^1$ with $ |\lambda_s^k|_{l_1}\le \varepsilon^{k-1} C^k  \| f  \|_{H^1(\mathbb{R}^n)}$, and a function $E_K\in H^1(\mathbb{R}^n)$ with $$\|E_K\|_{H^1(\mathbb{R}^n)}\le (C\varepsilon)^K  \| f  \|_{H^1(\mathbb{R}^n)}.$$
	So that
	$$f=\sum_{k=1}^{K}\sum_{s=1}^{\infty }\lambda_s^k{\textstyle\prod_{l}(g_s^k,h_{s,1}^k,\dots,h_{s,m}^k)}+E_k.$$
	Passing $K\to \infty $ gives the desired decomposition of $$f=\sum_{k=1}^{\infty}\sum_{s=1}^{\infty}\lambda_s^k{\textstyle\prod_{l}(g_s^k,h_{s,1}^k,\dots,h_{s,m}^k)}$$
with
$$\sum_{k=1}^{\infty}\sum_{s=1}^{\infty}|\lambda_s^k|\le \sum_{k=1}^{\infty}\varepsilon ^{-1}(C\varepsilon)^K\|f\|_{H^1(\mathbb{R}^n)}=\frac{C}{1-\varepsilon C}\|f\|_{H^1(\mathbb{R}^n)}.$$
By Lemma \ref{lem-main2}, we have that
 $$  \|{\textstyle \prod_{l}(g^k_{s,1},h^{k}_{s,1},\dots ,h^k_{s,m})}\|_{H^1(\mathbb{R}^n)}\lesssim  \| g_s^{k} \|_{L^{p^{\prime}}(\nu^{1-p^{\prime}}_{\vec{\omega}})}  \|h_{s,1}^k  \|_{L^{p_1}(\omega_1)} \cdots   \|h_{s,m}^k  \|_{L^{p_m}(\omega_m)}.$$
    It is immediate that for any representation of $f$ as in \eqref{f=1}, the norm $ \| f\|_{H^1(\mathbb{R}^n)}$ is bounded by
    $$ C\inf  \{ \sum_{k=1}^{\infty}\sum_{s=1}^{\infty}\lambda_s^k \| g_s^{k} \|_{L^{p^{\prime}}(\nu^{1-p^{\prime}}_{\vec{\omega}})}  \|h_{s,1}^k  \|_{L^{p_1}(\omega_1)} \cdots   \|h_{s,m}^k  \|_{L^{p_m}(\omega_m)}\ :\ f\ \text{satisfies} \ \eqref{f=1}  \} .$$
Thus, we have completed the proof of Theorem \ref{main1}. \qed

\section{Applications}

As a direct application, we will give the characterization of $BMO(\mathbb{R}^n)$ and Lipschitz spaces via commutators of the multilinear Calder\'{o}n-Zygmund operator on weighted Lebesgue spaces, without individual conditions on the weights class.

It has been an open question whether commutator of multilinear Calder\'{o}n-Zygmund operators can be used to characterize $BMO$? Chaffee \cite{Chaf2016} first considered this problem and obtained the following characterized theorem.

\medskip

{\bf Theorem A} (cf. \cite{Chaf2016})\quad Suppose that $K$ is a homogeneous function of degree $-mn$, and there exists a ball $\mathbb{B}\subset \mathbb{R}^{mn}$ such that $1/K$ can be expended to a Fourier series in $\mathbb{B}$. If $1<p_1,\cdots,p_m,{p}<\infty$, $\frac 1p=\frac 1{p_1}+\cdots+\frac 1{p_m}$,
$$b\in BMO\,\Longleftrightarrow\, [b,T]_i:\, L^{p_1}(\mathbb{R}^n)\times\cdots\times L^{p_m}(\mathbb{R}^n)\to L^p(\mathbb{R}^n).$$

Later, the necessity of bounded multilinear commutator was extended by \cite{ChafCruz2018}(or \cite{GLW2020} for quasi-Banach space) in a very general structure. It is should be pointed that $\vec{\omega}\in A_{\vec{P}}$ does not imply $\omega_{k}\in L^{1}_{loc}$ for any $k$ (\cite[Remark 7.2]{LOPTT2009}), which showed that $\omega_{k}\in A_{p_{k}}$ may be not valid. Thus, they proved the necessity theory for multilinear commutators in the weighted case with making stronger assumption.

\medskip

{\bf Theorem B} (cf. \cite{ChafCruz2018}).\quad{\it Given $\vec{P}$ with $p>1$, suppose $\omega_{k}\in A_{p_{k}}, k=1,\cdots,m,$ and $\vec{\omega}\in A_{\vec{P}}$. If $T$ is a regular bilinear singular integral, and $b$ is function such that for $i=1,\cdots,m$, $[b,T]_{i}: L^{p_{1}}\times\cdots\times L^{p_{m}}(\omega_{m})\rightarrow L^{p}(\nu_{\vec{\omega}})$, then $b\in BMO(\mathbb{R}^n)$.}

\medskip

Recently, the necessity of bounded multilinear commutator with the genuinely multilinear weights had been resolved in \cite{L2020} using sparse domination and in \cite{Wang} by Cauchy integral trick. In this paper, we revisit this problem using different techniques. In contrast to previous results, one advantage
of the approach taken in this paper is that it provides for a constructive algorithm to produce the weak factorization of Hardy spaces.

\begin{theorem} \label{a1}
Let $1\le l\le m$,  $1< p_{1},\dots,p_{m}<\infty$, $\frac{1}{p_1}+\dots+\frac{1}{p_m} =\frac{1}{p}$ and $\vec{\omega}\in A_{\vec{P}}$. Then the commutator $  [b,T  ]_{l}(f_1,\cdots,f_m)(x)$ is a bounded map form $L^{p_1}(\omega_1)\times \dots \times L^{p_m}(\omega_m)$ to $L^{p}(\nu_{\vec{\omega}})$ if and only if $b \in BMO(\mathbb{R}^n)$.
\end{theorem}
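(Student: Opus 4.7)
The plan is to prove the two implications of Theorem \ref{a1} separately, with the sufficiency being essentially classical and the necessity being the new contribution that exploits Theorem \ref{main1}.

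For the direction $b \in BMO(\mathbb{R}^n) \Rightarrow [b,T]_l$ bounded, I would appeal to the well-established weighted bounds for multilinear commutators with BMO symbols on $L^{p_1}(\omega_1) \times \cdots \times L^{p_m}(\omega_m) \to L^p(\nu_{\vec{\omega}})$ under $\vec{\omega} \in A_{\vec{P}}$; this follows the line of P\'erez--Torres combined with the Li--Martell--Ombrosi multivariable extrapolation and is already documented in the literature, so no new idea is required.

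The substantive direction is the necessity. The strategy is: assuming $[b,T]_l$ is bounded with norm $\|[b,T]_l\|$, show that $b$ induces a bounded linear functional on $H^1(\mathbb{R}^n)$, whence $b \in BMO$ follows from the classical $H^1$--$BMO$ duality. The key preliminary step is the pointwise/integrated duality identity
$$\int_{\mathbb{R}^n} b(x)\,{\textstyle\prod_{l}}(g,h_1,\ldots,h_m)(x)\,dx = -\int_{\mathbb{R}^n} g(x)\,[b,T]_l(h_1,\ldots,h_m)(x)\,dx,$$
which I would derive by expanding the definition \eqref{commutator} of ${\textstyle\prod_{l}}$ and using the adjoint relation that characterizes $T_l^*$, namely $\int h_l\,T_l^*(h_1,\ldots,h_{l-1},g,h_{l+1},\ldots,h_m)\,dx = \int g\,T(h_1,\ldots,h_m)\,dx$, applied with $h_l$ replaced by $b h_l$ in the first term of ${\textstyle\prod_{l}}$. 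Once this identity is in hand, for any $f \in H^1(\mathbb{R}^n)$ Theorem \ref{main1} provides a factorization $f = \sum_{k,s}\lambda_s^k\,{\textstyle\prod_{l}}(g_s^k,h_{s,1}^k,\ldots,h_{s,m}^k)$ with the corresponding sum of products of weighted norms comparable to $\|f\|_{H^1}$. Combining the identity, weighted H\"older's inequality, and the boundedness hypothesis gives
$$\left|\int_{\mathbb{R}^n} b\,f\,dx\right| \le \sum_{k,s}|\lambda_s^k|\,\|g_s^k\|_{L^{p'}(\nu_{\vec{\omega}}^{1-p'})}\,\|[b,T]_l(h_{s,1}^k,\ldots,h_{s,m}^k)\|_{L^p(\nu_{\vec{\omega}})} \lesssim \|[b,T]_l\|\,\|f\|_{H^1},$$
and infimizing over all admissible representations of $f$ completes the argument.

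The main obstacle I anticipate is the rigorous justification of the duality pairing and the interchange of summation with integration: $b$ is a priori only a locally integrable function rather than an element of a dual space, and a generic $f \in H^1$ is not compactly supported. Fortunately, Lemma \ref{lem-main2} already guarantees that each building block ${\textstyle\prod_{l}}(g,h_1,\ldots,h_m)$ belongs to $H^1(\mathbb{R}^n) \subset L^1(\mathbb{R}^n)$, and the adjoint computation above yields an explicit $L^1$ representative, so standard truncation and dominated convergence arguments reduce everything to the term-by-term identity.
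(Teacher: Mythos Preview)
Your proposal is correct and follows essentially the same approach as the paper: the sufficiency is cited as known (the paper attributes it to \cite{LOPTT2009}), and the necessity is obtained exactly by combining the duality identity $\langle b,\prod_l(g,h_1,\dots,h_m)\rangle=-\langle g,[b,T]_l(h_1,\dots,h_m)\rangle$ with the weak factorization of Theorem~\ref{main1}, weighted H\"older, and $H^1$--$BMO$ duality. Your attention to the justification of the pairing and the interchange of summation with integration goes slightly beyond what the paper spells out, but the underlying argument is the same.
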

\begin{proof}
The upper bound in this theorem is contained in \cite{LOPTT2009}. For the lower bound, suppose that $f\in{H^1(\mathbb{R}^n)}$, using the weak factorization in Theorem \ref{main1}  we obtain
	\begin{equation*}
	\begin{split}
 \langle b,f   \rangle_{L^2(\mathbb{R}^n)}&=\sum_{k=1}^{\infty }\sum_{s=1}^{\infty }\lambda_s^k   \langle b,{\textstyle\prod_{l}(g_s^k,h_{s,1}^k,\dots,h_{s,m}^k)}   \rangle_{L^2(\mathbb{R}^n)}\\
	&=\sum_{k=1}^{\infty }\sum_{s=1}^{\infty }\lambda_s^k  \langle g_s^k,{ [ b,T  ]_l(h_{s,1}^k,\dots,h_{s,m}^k)}   \rangle_{L^2(\mathbb{R}^n)}.
	\end{split}
	\end{equation*}
    Hence, by the weighted boundedness of $[b,T]_l$, we arrive at
  	\begin{equation*}
  	\begin{split}  
  |\langle b,f\rangle_{L^2(\mathbb{R}^n)}|&\le \sum_{k=1}^{\infty }\sum_{s=1}^{\infty }|\lambda_s^k|\ \|g_s^k\|_{L^{p^{\prime}}(\nu^{1-p^{\prime}}_{\vec{\omega}})}
  		\|{ [ b,T ]_l(h_{s,1}^k,\dots,h_{s,m}^k)}\|_{L^p(\nu_{\vec{\omega}})}\\
  	&\lesssim \sum_{k=1}^{\infty} \sum_{s=1}^{\infty} |\lambda_{s}^{k} | \|g_{s}^{k} \|_{L^{p^{\prime}} (\nu^{1-p^{\prime}}_{\vec{\omega}} )} \prod_{i=1}^{m} \|h_{s,i}^{k} \|_{L^{p_{i}} (\omega_i)}\\
  	& \lesssim \|g_{s}^{k} \|_{L^{p^{\prime}} (\nu^{1-p^{\prime}}_{\vec{\omega}} )} \prod_{i=1}^{m} \|h_{s,i}^{k} \|_{L^{p_{i}} (\omega_i)}. 
   	\end{split}
   \end{equation*}
   From the duality theorem between $H^1(\mathbb{R}^n)$ and $BMO(\mathbb{R}^n)$, it follows that $b\in BMO(\mathbb{R}^n)$.
   \end{proof}

Note that in \cite{FS1972}, the authors show that the dual of $H^\rho(\mathbb{R}^n)$ and $Lip_{\alpha}(\mathbb{R}^n)$, where $\alpha=n(1/\rho-1)$. In this paper, we give a new characterization of  $Lip_{\alpha}(\mathbb{R}^n)$ space for $mn$-homogeneous multilinear Caldr\'{o}n-Zygmund operators, using the duality theorem between $H^\rho(\mathbb{R}^n)$ and $Lip_{\alpha}(\mathbb{R}^n)$.

\begin{theorem}\label{a2}
Let $b\in L_{loc}(\mathbb{R}^n)$, $0<\alpha<n$, $\frac{1}{p_1}+\cdots+\frac{1}{p_m}-\frac{1}{q}=\frac{\alpha}{n}$ and $\vec{\omega}\in A_{\vec{P},q}$. Then
	\begin{align*}
		 \|b \|_{Lip_{\alpha}(\mathbb{R}^n)}\approx \|[b,T]_l \|_{L^{p_1}(\omega_1^{p_1})\times\cdots\times L^{p_m}(\omega_m^{p_m}) \rightarrow L^q(\mu_{\vec{\omega}})}.
	\end{align*}
\end{theorem}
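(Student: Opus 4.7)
\bigskip

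\noindent\textbf{Proof proposal for Theorem \ref{a2}.}
The plan is to prove the two inequalities separately. For the upper bound, I would exploit the pointwise domination, already recalled in the proof of Lemma \ref{lem-main22}, namely
\[
\bigl|[b,T]_{l}(f_{1},\dots,f_{m})(x)\bigr|\lesssim \|b\|_{Lip_{\alpha}(\mathbb{R}^n)}\, I_{\alpha,m}(|f_{1}|,\dots,|f_{m}|)(x),
\]
valid whenever $b\in Lip_{\alpha}(\mathbb{R}^n)$. Since $\vec{\omega}\in A_{\vec{P},q}$ and the exponents satisfy $\frac{1}{p_{1}}+\cdots+\frac{1}{p_{m}}-\frac{1}{q}=\frac{\alpha}{n}$, Moen's weighted boundedness of the multilinear fractional integral (\cite{M2009}) gives
\[
\|I_{\alpha,m}(|f_{1}|,\dots,|f_{m}|)\|_{L^{q}(\mu_{\vec{\omega}})}\lesssim \prod_{i=1}^{m}\|f_{i}\|_{L^{p_{i}}(\omega_{i}^{p_{i}})},
\]
which yields $\|[b,T]_{l}\|\lesssim \|b\|_{Lip_{\alpha}(\mathbb{R}^n)}$ at once.

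For the lower bound I would fix $\rho\in(\tfrac{n}{n+\gamma},1)$ with $\alpha=n(\tfrac{1}{\rho}-1)$, i.e.\ $\rho=\tfrac{n}{n+\alpha}$; the hypothesis on $\alpha$ together with the scaling relation $\frac{1}{p_{1}}+\cdots+\frac{1}{p_{m}}+1=\frac{1}{\rho}+\frac{1}{q}$ places us precisely in the setting of Theorem \ref{main2}. Given $f\in H^{\rho}(\mathbb{R}^{n})$, I would use Theorem \ref{main2} to write
\[
f=\sum_{k=1}^{\infty}\sum_{s=1}^{\infty}\lambda_{s}^{k}\,\Pi_{l}(g_{s}^{k},h_{s,1}^{k},\dots,h_{s,m}^{k}),
\]
and then test $b$ against $f$ via
\[
\langle b,f\rangle=\sum_{k,s}\lambda_{s}^{k}\langle b,\Pi_{l}(g_{s}^{k},h_{s,1}^{k},\dots,h_{s,m}^{k})\rangle
=\sum_{k,s}\lambda_{s}^{k}\langle g_{s}^{k},[b,T]_{l}(h_{s,1}^{k},\dots,h_{s,m}^{k})\rangle,
\]
the second identity being the formal duality that defines $\Pi_{l}$. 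Applying Hölder with the pair $L^{q'}(\mu_{\vec{\omega}}^{1-q'})$ and $L^{q}(\mu_{\vec{\omega}})$, and then the assumed boundedness of $[b,T]_{l}$, each summand is controlled by
\[
|\lambda_{s}^{k}|\,\|g_{s}^{k}\|_{L^{q'}(\mu_{\vec{\omega}}^{1-q'})}\,\|[b,T]_{l}\|\,\prod_{i=1}^{m}\|h_{s,i}^{k}\|_{L^{p_{i}}(\omega_{i}^{p_{i}})}.
\]

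The mildly delicate step is summing these over $k,s$ when $\rho<1$: because the factorization theorem only gives $\ell^{\rho}$ control on the coefficients, I would invoke the elementary inequality $\sum a_{k}\le (\sum a_{k}^{\rho})^{1/\rho}$ valid for $0<\rho\le 1$ and $a_{k}\ge 0$, applied to $a_{s}^{k}=|\lambda_{s}^{k}|\|g_{s}^{k}\|\prod_{i}\|h_{s,i}^{k}\|$. Taking the infimum over all admissible representations and using Theorem \ref{main2} yields
\[
|\langle b,f\rangle|\lesssim \|[b,T]_{l}\|\cdot\|f\|_{H^{\rho}(\mathbb{R}^{n})}.
\]
Finally, the duality $(H^{\rho}(\mathbb{R}^{n}))^{*}=Lip_{\alpha}(\mathbb{R}^{n})$ with $\alpha=n(\tfrac{1}{\rho}-1)$ (\cite{FS1972}) allows me to conclude $b\in Lip_{\alpha}(\mathbb{R}^{n})$ with $\|b\|_{Lip_{\alpha}(\mathbb{R}^{n})}\lesssim \|[b,T]_{l}\|$. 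The main obstacle I expect is the passage from the $\ell^{\rho}$ summability of the coefficients to a usable $\ell^{1}$-type bound in the testing identity; once that is handled by the sub-additivity inequality above, the remaining steps are an assembly of Hölder, the hypothesis, and the standard $H^{\rho}$–$Lip_{\alpha}$ duality.
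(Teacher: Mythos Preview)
Your proposal is correct and follows essentially the same route as the paper: pointwise domination by $I_{\alpha,m}$ plus Moen's weighted estimate for the upper bound, and the weak factorization of $H^{\rho}$ from Theorem \ref{main2} together with the $\Pi_{l}$--$[b,T]_{l}$ duality and the $H^{\rho}$--$Lip_{\alpha}$ pairing for the lower bound. If anything, you are more careful than the paper at one point: the paper's proof stops at the $\ell^{1}$ sum $\sum_{k,s}|\lambda_{s}^{k}|\|g_{s}^{k}\|\prod_{i}\|h_{s,i}^{k}\|$ and invokes duality directly, tacitly using the sub-additivity inequality $\sum a_{k}\le(\sum a_{k}^{\rho})^{1/\rho}$ that you make explicit.
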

\begin{proof}
Let $b\in Lip_{\alpha}(\mathbb{R}^n)$. The weighted boundedness of $[b,T]_l$ follows from the fact that
$$|[b,T]_l(f_{1},\cdots,f_{m})(x)|\leq \|b\|_{Lip_{\alpha}(\mathbb{R}^n)}I_{\alpha,m}(|f_{1},\cdots,f_{m}|)(x).$$

Now we consider the opposite case. Suppose $f\in H^\rho(\mathbb{R}^n)$, then by Theorem \ref{main2}, there exist sequences $  \{ \lambda_{s}^{k} \}\in \ell_{s}$ and functions $ g_{s}^{k}\in L^{q'}(\mu_{\vec{\omega}}^{1-q'}),h_{s,1}^{k}\in L^{p_{1}}(\omega_1^{p_1}),\cdots ,h_{s,m}^{k}\in L^{p_{m}}(\omega_{m}^{p_m}),$ such that
\begin{align*}
	f=\sum_{k=1}^{\infty}\sum_{s=1}^{\infty}\lambda_{s}^{k}
	{\textstyle \prod_{l}}(g_{s}^{k},h_{s,1}^{k},\dots ,h_{s,m}^{k}) \quad in \quad  H^\rho(\mathbb{R}^n)
\end{align*}
and so,
\begin{align*}
 \langle b,f \rangle_{L^2(\mathbb{R}^n)}
&= \Big\langle b,\sum_{k=1}^{\infty}\sum_{s=1}^{\infty}\lambda_{s}^{k}	{\textstyle\prod_{l}}(g_{s}^{k},h_{s,1}^{k},\cdots,h_{s,m}^{k}) \Big\rangle_{L^2(\mathbb{R}^n)}\\
&=\sum_{k=1}^{\infty}\sum_{s=1}^{\infty}\lambda_s^k  \Big\langle b, {\textstyle\prod_{l}}(g_s^k,h_{s,1}^k,\cdots,h_{s,m}^k) \Big\rangle_{L^2(\mathbb{R}^n)}\\
&=\sum_{k=1}^{\infty}\sum_{s=1}^{\infty}\lambda_s^k  \Big\langle g_s^k, [b,T]_l(h_{s,1}^k,\cdots,h_{s,m}^k) \Big\rangle_{L^2(\mathbb{R}^n)}.
\end{align*}
This implies, by Holder's inequality and $[b,T]_l$ maps  $L^p_{1}(\omega_1^{p_1})\times \cdots \times L^{p_{m}}(\omega_m^{p_m})$ to $L^{q}(\mu_{\vec{\omega}}^{1-q'})$,
\begin{align*}
| \langle b,f \rangle_{L^{2}(\mathbb{R}^n)}|&\le\sum_{k=1}^{\infty}\sum_{s=1}^{\infty}|\lambda_s^k|||g_s^k||_{L^{q'}(\mu_{\vec{\omega}}^{1-q'})}||[b,T]_l(h_{s,1}^k,\cdots,h_{s,m}^k)||_{L^{q}(\mu_{\vec{\omega}}^{1-q'})}\\
&\lesssim \sum_{k=1}^{\infty}\sum_{s=1}^{\infty}|\lambda_s^k|\|g_s^k\|_{L^{q'}(\mu_{\vec{\omega}}^{1-q'})}\prod_{i=1}^{m} \|h_{s,j}^k\|_{L^{p_i}(\omega_j^{p_i})}.
\end{align*}
By the duality between $H^\rho(\mathbb{R}^n)$ and $Lip_{\alpha}(\mathbb{R}^n)$, which shows that $b\in Lip_{\alpha}(\mathbb{R}^{n})$ and the result follows from here.
\end{proof}


\end{document}